\newtheorem{thm}{Theorem}[section]
\newtheorem{con}{Conjecture}
\newtheorem{cor}[thm]{Corollary}
\newtheorem{lem}[thm]{Lemma}
\newtheorem{prop}[thm]{Proposition}
\theoremstyle{definition}
\newtheorem{defn}[thm]{Definition}
\newtheorem{defns}[thm]{Definitions}
\theoremstyle{remark}
\newtheorem{rem}[thm]{Remark}
\numberwithin{equation}{section}
\begin{document}
\title{Large flats in the pants graph}
\author{Jos\'e L. Est\'evez}
\date{} 
\address{UNED. Departamento de Matem\'aticas Fundamentales, C/ Senda del Rey, 9. 28040 Madrid. Spain.}
\email{jestevez@mat.uned.es}

\thanks{}
\subjclass[2010]{Primary 57M50; Secondary 05C12}
\keywords{Pants complex. Weil-Petersson metric. }
\begin{abstract} This note is about the geometry of the pants graph $\mathcal{P}$, a natural simplicial graph associated to a finite type topological surface $\Sigma$ where vertices represents pants decompositions. The main result in this note asserts that for a multicurve $Q$ whose complement is a number of subsurfaces of complexity at most $1$ the corresponding subgraph $\mathcal{P}_{Q}$ is totally geodesic in $\mathcal{P}$ previously considering this as a metric space assigning length one to each edge. As a consequence of this result we make explicit  the existence of large flats in the pants graph.
\end{abstract}

\maketitle.

\section{Introduction}

Let $\Sigma$ be a compact orientable surface of genus $g$ with $r$ boundary components and such that $3g-3+r>0$. The pants graph $\mathcal{P}$ is a graph associated the surface $\Sigma$, was introduced in \cite{HT} and further studied in \cite{H}. The number $3g-3+r>0$, is the complex dimension of its Teichm\"uller space $\mathcal{T}$ of $\Sigma$ and it is also called the complexity of $\Sigma$. By assigning length one to the edges in $\mathcal{P}$ we obtain a metric space $(\mathcal{P},d)$ and it reveals as a combinatorial model of the Teichm\"uller  space with the Weil-Petersson metric \cite[Theorem 1.1]{Br1}, specifically there is a quasi-isometry between the $0$-squeleton of $\mathcal{P}$ and $\mathcal{T}$ with the WP metric.

In \cite{APK1, APK2} the authors study geometric properties of the \emph{pants graph} $\mathcal{P}$ by considering projections $\mathcal{P}\to \mathcal{P}_{Q}$ over the subgraph corresponding to a $2$-handle multicurve. They exhibit $\mathcal{P}_{Q}$ as a totally geodesic subgraph of $\mathcal{P}$ and as a consequence the existence of isometric embeddings of $\mathbb{Z}^{2}$ into $\mathcal{P}_{Q}$. 

The existence of maximal size quasi-flats in $\mathcal{P}$ is known, (quasi-isometric embeddings of $\mathbb{Z}^{n}$ in $\mathcal{P}$ with maximal $n$), see \cite{B-M}. In this note we prove the following.

\begin{thm}\label{T1} Let $\Sigma$ be a compact, connected and orientable surface, and denote by $Q$ an arbitrary $n$-handle multicurve on $\Sigma$. Then, $\mathcal{P}_{Q}$ is totally geodesic in  $\mathcal{P}$.
\end{thm}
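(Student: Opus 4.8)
The plan is to reduce the statement to a sharp estimate on how a single elementary move interacts with the complexity-one pieces of the complement of $Q$. Write $\Sigma\setminus Q=\bigsqcup_{i=1}^{n}S_{i}$, where each $S_{i}$ has complexity at most one, and let $S_{1},\dots,S_{n}$ be the complexity-one pieces (the handles); the remaining pieces are pairs of pants and carry no moves. A vertex of $\mathcal{P}_{Q}$ is $Q$ together with one interior curve $c_{i}\subset S_{i}$ in each handle, and two such vertices are joined by an edge exactly when they differ by a Farey flip inside a single $S_{i}$. Hence $\mathcal{P}_{Q}$ is canonically the $\ell^{1}$ product $\prod_{i=1}^{n}F(S_{i})$ of the Farey graphs (pants graphs) of the handles, and for $P=(c_{i})$, $P'=(c_{i}')$ in $\mathcal{P}_{Q}$ one has $d_{\mathcal{P}_{Q}}(P,P')=\sum_{i=1}^{n}d_{F(S_{i})}(c_{i},c_{i}')$. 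Since any edge-path in $\mathcal{P}_{Q}$ is an edge-path in $\mathcal{P}$, the inequality $d_{\mathcal{P}}\le d_{\mathcal{P}_{Q}}$ on $\mathcal{P}_{Q}$ is immediate, and the whole theorem follows once we prove the reverse inequality together with the rigidity of its equality case.

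For the reverse inequality I would track, along an arbitrary path, the Masur--Minsky subsurface projections $\pi_{S_{i}}$ to the arc-and-curve complex of each handle, read inside $F(S_{i})$. The engine is the following per-edge estimate: \emph{for every elementary move $P_{k}\to P_{k+1}$ of $\mathcal{P}$ one has $\sum_{i=1}^{n}d_{F(S_{i})}\big(\pi_{S_{i}}(P_{k}),\pi_{S_{i}}(P_{k+1})\big)\le 1$.} An elementary move is a Farey flip supported in a single complexity-one subsurface $Y$, namely the non-pants component of the complement of the curves that are not changed. I would split into cases according to the position of $Y$. If $Y=S_{i}$ for some handle, the move changes only the interior curve $c_{i}$, so $\pi_{S_{i}}$ moves by exactly one Farey step and every other $\pi_{S_{j}}$ is fixed. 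The delicate case is when the flipped curve $\alpha$ belongs to $Q$: here $Y$ is the union of the two pairs of pants of $P_{k}$ adjacent to $\alpha$ and may straddle two handles $S_{i},S_{j}$. The key observation is that, because each pair of pants of $P_{k}$ lies inside a single $S_{i}$, the piece $Y$ meets each handle in at most one pair of pants, whose inner boundary is the interior curve; consequently the flip leaves every interior curve of $P_{k}$ untouched, so $P_{k}$ and $P_{k+1}$ still contain the same curve fully inside each handle, $\pi_{S_{i}}(P_{k})=\pi_{S_{i}}(P_{k+1})$ for every $i$, and the move contributes $0$ to the sum. The hypothesis ``complexity at most one'' is used precisely here, to force a persistent interior curve off the flip and pin down the projections.

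Granting the per-edge estimate, summation over a path $P_{0},\dots,P_{m}$ from $P$ to $P'$ in $\mathcal{P}_{Q}$ produces, in each handle, a Farey path from $c_{i}$ to $c_{i}'$, whence
\[
m=\sum_{k}1\ \ge\ \sum_{k}\sum_{i}d_{F(S_{i})}\big(\pi_{S_{i}}(P_{k}),\pi_{S_{i}}(P_{k+1})\big)\ \ge\ \sum_{i}d_{F(S_{i})}(c_{i},c_{i}')=d_{\mathcal{P}_{Q}}(P,P').
\]
This proves $d_{\mathcal{P}}=d_{\mathcal{P}_{Q}}$ on $\mathcal{P}_{Q}$, i.e. the isometric embedding. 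For total geodesicity I would analyse the equality case: if $P_{0},\dots,P_{m}$ is a $\mathcal{P}$-geodesic between points of $\mathcal{P}_{Q}$, then all inequalities above are equalities, so every move is of the interior type and must reduce some $d_{F(S_{i})}(\cdot,c_{i}')$; in particular no curve of $Q$ is ever flipped and each intermediate vertex still contains $Q$, so the whole geodesic lies in $\mathcal{P}_{Q}$. I expect the main obstacle to be exactly the straddling case above: one must verify rigorously that a flip of a $Q$-curve can never make net progress toward $P'$ in any handle, and in particular that the persistence-of-interior-curve argument survives along paths that have already left $\mathcal{P}_{Q}$ and destroyed some interior curves, so that $\pi_{S_{i}}$ must then be estimated from genuine arcs rather than from an interior curve. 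Controlling these mixed configurations is the technical heart of the proof.
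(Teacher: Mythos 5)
Your reduction to the product structure $\mathcal{P}_{Q}\cong\prod_{i}F(S_{i})$ and the inequality $d_{\mathcal{P}}\le d_{\mathcal{P}_{Q}}$ match the paper, but the engine of your argument, the per-edge estimate
\[
\sum_{i}d_{F(S_{i})}\bigl(\pi_{S_{i}}(P_{k}),\pi_{S_{i}}(P_{k+1})\bigr)\le 1\quad\text{for \emph{every} elementary move of }\mathcal{P},
\]
is exactly the statement the paper singles out as unavailable: the section on $2$-handle multicurves exhibits a single elementary move $\nu_{0}\to\nu_{1}$, realized by curves $\alpha,\beta$ with $|\alpha\cap\beta|=2$, whose natural projections to $\mathcal{P}_{Q}$ lie at distance $2$, and states explicitly that one ``may not assure that $\pi$ is distance non-increasing.'' The configuration responsible is missed by your two-case analysis (``support of the flip is a handle'' or ``flipped curve belongs to $Q$''): along a path that has left $\mathcal{P}_{Q}$, the moving curves meet a handle $Y$ in essential \emph{arcs}, and in the worst case one gets a \emph{special couple}, i.e.\ a seam $s=\alpha\cap Y$ crossing a curve $\beta\subset Y$ twice; by Lemma \ref{int} one then has $\iota(\pi_{Y}(s),\beta)=2\iota(s,\beta)=4$, i.e.\ Farey distance $2$ in a single coordinate. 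Your case analysis tacitly assumes the intermediate vertices still contain $Q$ (or at least the boundary of each handle), which is precisely what fails; you flag these ``mixed configurations'' as the technical heart, but you give no argument for them, and no per-edge argument can exist since the estimate is false edge by edge. There is also a second, quieter gap: the projections $\pi_{S_{i}}$ of a pants decomposition are finite \emph{sets}, and minimal distances between sets do not telescope along a path, so even granting a per-edge bound your summation requires coherent single-vertex choices along the whole path --- and coherence of choices is exactly where the counterexample bites.

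The paper's actual route concedes this failure and proves something weaker but sufficient: only for \emph{geodesic} paths does it bound the distance between (suitably chosen) endpoint projections by the length (Theorem \ref{TP}), and even that is not obtained edge by edge but by amortization. Proposition \ref{pro1} fixes one boundary arc of a regular neighborhood of $(\alpha\cup\beta)\cap Y$ and uses the same arc to project several consecutive vertices, so that the occasional jumps are absorbed over subpaths; a separate lemma shows that along a geodesic the special couples in a fixed handle occupy a subpath of length at most $2$; Lemma \ref{sc} uses the homology condition in the definition of an $n$-handle multicurve to produce a second seam $s'\subset\alpha\cap Y$ with $\pi_{Y}(s')=\beta$, restoring a good choice of projection; and finally the orthogonality Lemma \ref{l2} ($\pi_{Q}(\nu_{1})=\nu_{0}$ for an edge leaving $\mathcal{P}_{Q}$) converts the bound $d_{Q}\le r-2$ for the inner subpath of a putative escaping geodesic into a contradiction. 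To rescue your write-up you would have to replace the per-edge claim by a statement quantified only over geodesic paths, allow individual edges to increase projection distance by $2$, and show that such increases are compensated over subpaths of bounded length --- which is, in substance, the content of Sections 4--6 of the paper.
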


from which we deduce the existence of large flats in $\mathcal{P}$ in section \ref{LF} and that partially supports the following \cite{APK1, APK2}:

\begin{con} Given two compact and orientable surfaces $\Sigma_{1}$ and
$\Sigma_{2}$, the image of a simplicial embedding $\phi:\mathcal{P}(\Sigma_{1})\to \mathcal{P}(\Sigma_{2})$ is totally geodesic.
\end{con}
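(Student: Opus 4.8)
The plan is to first reduce the statement to a product of Farey graphs and then to control arbitrary geodesics by a distance-nonincreasing potential built from subsurface projections. Since $Q$ is an $n$-handle multicurve, cutting along $Q$ yields complementary pieces $W_1,\dots,W_k$ each of complexity at most $1$, so that a pants decomposition $P\supseteq Q$ is determined by the single curve $\beta_i=P\cap W_i$ it induces in each complexity-$1$ piece. The pants graph of a complexity-$1$ surface is the Farey graph $\mathcal{F}(W_i)$, an elementary move supported in $W_i$ is exactly a Farey edge, and moves in disjoint pieces commute; this identifies $\mathcal{P}_Q$ with the $\ell^1$-product $\prod_i \mathcal{F}(W_i)$, whose intrinsic metric is $\sum_i d_{\mathcal{F}(W_i)}(\beta_i,\beta_i')$. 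Realizing the required Farey moves in each piece independently already gives the upper bound $d_{\mathcal{P}}(P,P')\le \sum_i d_{\mathcal{F}(W_i)}(\beta_i,\beta_i')$ along a path that stays inside $\mathcal{P}_Q$.

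The core is the matching lower bound together with convexity, which I would obtain from a single potential function. For $P,P'\in\mathcal{P}_Q$ with $P\cap W_i=\beta_i$ and $P'\cap W_i=\beta_i'$, set
\[
\Phi(X)=\sum_{i=1}^{k} d_{\mathcal{F}(W_i)}\bigl(\pi_{W_i}(X),\beta_i'\bigr)
\]
for an arbitrary vertex $X$ of $\mathcal{P}$, where $\pi_{W_i}$ is the Masur--Minsky subsurface projection to $W_i$ and the distance to a projection set is measured by its nearest point. Then $\Phi(P)=\sum_i d_{\mathcal{F}(W_i)}(\beta_i,\beta_i')$ and $\Phi(P')=0$. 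If I can show that $\Phi$ decreases by at most $1$ across every edge of $\mathcal{P}$, with equality only for an elementary move supported in a single piece $W_i$ that advances along a Farey geodesic toward $\beta_i'$, then any path from $P$ to $P'$ has length at least $\Phi(P)-\Phi(P')$, giving the sharp lower bound; and a geodesic, forced to decrease $\Phi$ by exactly $1$ at every step, can never break a boundary curve of any $W_i$, that is, a curve of $Q$, so it remains in $\mathcal{P}_Q$. This yields both the isometric embedding and total geodesy at once.

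The main obstacle is establishing the sharp step inequality for $\Phi$. An elementary move is supported in some complexity-$1$ piece $Y$ of the complement of the other curves of $X$, and I would analyze its effect on each $\pi_{W_i}$ in three cases: $Y=W_i$, so $X\supseteq\partial W_i$ and the move is exactly one Farey step, changing that coordinate by $1$; $Y$ disjoint from $W_i$, so the coordinate is unchanged; and $Y$ overlapping $W_i$, which can occur only once the move has broken a curve of $Q=\bigcup_i\partial W_i$. In the first two cases a single move changes at most one coordinate, and only by one Farey step, which is the desired behaviour. The delicate point is the overlapping case: there the generic coarse-Lipschitz bound for subsurface projection exceeds $1$, so I must exploit the special feature that $\mathcal{F}(W_i)$ is the curve complex of a complexity-$1$ surface, where projection distance is computed exactly by intersection numbers, to show that any progress gained in some $W_i$ by breaking $Q$ is paid for by an equal loss elsewhere. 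Verifying this exchange bound, rather than the product identification or the upper bound, is where essentially all the work lies.
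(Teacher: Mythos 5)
There is a genuine gap, and it comes in two layers.

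First, you are not proving the stated result. The statement is the \emph{Conjecture}: for \emph{arbitrary} compact orientable surfaces, the image of \emph{any} simplicial embedding $\phi:\mathcal{P}(\Sigma_{1})\to\mathcal{P}(\Sigma_{2})$ is totally geodesic. The paper does not prove this -- it is stated as an open conjecture which Theorem \ref{T1} only ``partially supports.'' Your opening sentence (``Since $Q$ is an $n$-handle multicurve\dots'') silently substitutes the hypothesis of Theorem \ref{T1} for the hypothesis of the Conjecture. The image of a general simplicial embedding need not be $\mathcal{P}_{Q}$ for an $n$-handle multicurve: even when $\phi$ is induced by an inclusion $\Sigma_{1}\hookrightarrow\Sigma_{2}$, the relevant complementary piece is $\Sigma_{1}$ itself, which can have complexity greater than $1$; then the pieces' pants graphs are not Farey graphs, the $\ell^{1}$-product identification disappears, and the exact intersection-number formulas for projection distance that your ``exchange bound'' relies on are unavailable. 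Nothing in your proposal addresses this, so at best you are attacking Theorem \ref{T1}.

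Second, even restricted to the $n$-handle case, the key step you defer (``where essentially all the work lies'') is not merely unverified -- it is false as stated. You need every edge of $\mathcal{P}$ to change $\Phi$ by at most $1$. The paper's Section 3 example (figure 2) exhibits a single elementary move $\nu_{0}\to\nu_{1}$, realized by curves forming a \emph{special couple} with a $4$-holed sphere $Y\subset\Sigma\setminus Q$, for which $d_{Q}(\pi_{Q}(\nu_{0}),\pi_{Q}(\nu_{1}))=2$ and no choice of representatives in the projection sets does better. Taking your target $\beta_{i}'$ to be the coordinates of $\pi_{Q}(\nu_{1})$, the potential $\Phi$ drops by $2$ across that one edge, and there is no compensating loss in any other factor $W_{j}$; so the per-edge inequality, and with it the lower bound and the rigidity argument forcing geodesics to stay in $\mathcal{P}_{Q}$, all collapse. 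This failure is precisely why the paper abandons any edgewise Lipschitz property of $\pi_{Q}$: it proves the projection bound only along \emph{geodesic} paths (Theorem \ref{TP}), by combining Proposition \ref{pro1} for subpaths with no special couples with a lemma showing that special couples can occur only in short, isolated clusters along a geodesic, and then deduces total geodesy from the orthogonality Lemma \ref{l2} applied to the endpoints. If you want to salvage your scheme, the repair is essentially to replace the per-edge potential estimate by a statement quantified over geodesics -- which is the paper's argument, not yours.
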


In this sense it should be mentioned \cite{ALPK} where by considering slightly modified modified graphs, the authors conclude that the Conjecture is true for the $6$-holed sphere.

The Weil-Petersson completion $\overline{\mathcal{T}}$ of $\mathcal{T}$ is obtained by including noded surfaces and $\overline{\mathcal{T}}$ becomes a stratified space, each stratum being a product of lower dimensional Teichm\"uller spaces and WP metric on them is the extension of the WP metric on $\mathcal{T}$. Given a multicurve $C$ on $\Sigma$ we obtain $\mathcal{T}_{C}\subset \partial \mathcal{T}$ by pinching the curves on $C$, the stratum $\mathcal{T}_{C}$ and its completion $\overline{\mathcal{T}}_{C}$ are totally geodesic in $\overline{\mathcal{T}}$ that is known to be a unique geodesic space \cite[Theorem 13]{W1}, \cite{Y}. A maximally noded surface corresponds to a pants decomposition of $\Sigma$ and it lies in a  $0$-dimensional stratum in $\overline{\mathcal{T}}\setminus \mathcal{T}$ \cite{C-P}.

If we see $\Sigma$ as a disjoint union of $g$ $1$-holed torus, $\lfloor \frac{g+r}{2} \rfloor-1$ $4$-holed spheres and eventually a $3$-holed sphere, this decomposition may arise as the complementary of certain $(g+\lfloor \frac{g+r}{2} \rfloor-1)$-handle multicurve $Q$ (eventually ignoring the three-holed sphere) and we deduce from the main theorem that the correspnding product of Farey graphs is totally geodesic in the pants graph, as a corollary we find an isometric embedding $\mathbb{Z}^{(g+\lfloor \frac{g+r}{2} \rfloor-1)}\to \mathcal{P}$ corresponding to an embedding $\mathbb{R}^{(g+\lfloor \frac{g+r}{2} \rfloor-1)}\to \overline{\mathcal{T}}$. The number of complexity $1$ subsurfaces thus obtained $\lfloor \frac{3g+r-2}{2} \rfloor$ is the geometric rank of $\overline{\mathcal{T}}$, \cite{B-M, Br-F} which translates to $\mathcal{P}$ by the quasi-isometry exhibited in \cite{Br1}.

\mathstrut

\begin{figure}[h]\label{Farey}
\begin{center}
\scalebox{.4}{\includegraphics{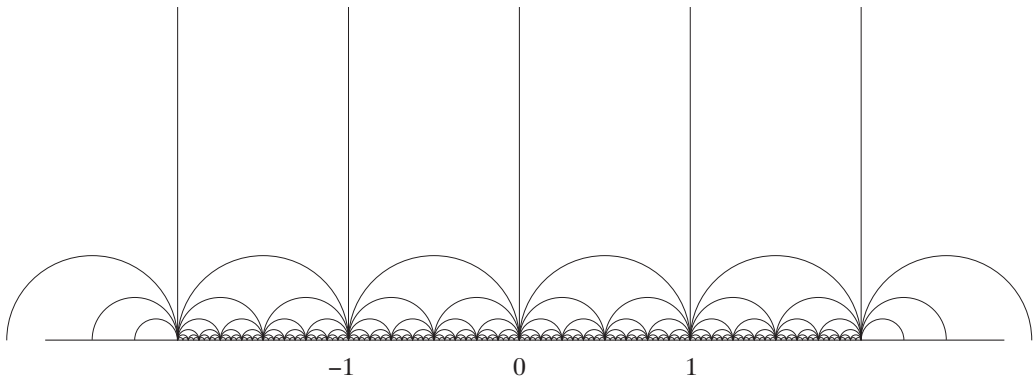}}
\end{center}
\caption{The figure shows the Farey graph as embedded in half plane model of $\mathbb{H}^{2}$. The vertex set is $\mathbb{Q}\cup \{1/0\}$ and represents the completion of $\mathcal{T}$. Two vertices $p/q$ and $r/s$ are joined by an edge when $|ps-rq|=1$.}
\label{Multi1}
\end{figure}

\section{Definitions}

\subsection{Curves and multicurves}\label{mc}

For the surface $\Sigma$ we shall denote $\kappa(\Sigma)$ the integer $3g-3+r$ and call it the complexity of $\Sigma$. This number denotes the complex dimension of the Teichm\"uller space of $\Sigma$.

A \emph{multicurve} is a collection of pairwise disjoint isotopy classes of simple closed curves, none of them isotopic to a boundary component.

We say a multicurve $\omega$ has rank $n\geq 2$, if $|\omega|=\kappa(\Sigma)-n$ for some non-negative integer $n$. A rank $n$ multicurve $Q$ is a $n$-\textit{handle multicurve} if the complement of every simple representative of $Q$ contains $n$ complexity $1$ components each of them with every boundary component trivial in the homology group $H_{1}(\Sigma,\partial \Sigma;\mathbb{Z})$.

In this paper we are concerned with $n$-handle multicurves where $2\leq n \leq \lfloor \frac{3g+r-2}{2} \rfloor$, where the upper bound represents the maximum number of pairwise disjoint subsurfaces of complexity $1$ contained in $\Sigma$.

In order to perform surgery constructions we refer to the representatives of these classes. In this sense we introduce the following definition that avoids to refer us to a geometrical structure on $\Sigma$ and makes easier t deal with those conditions we need.

\begin{defn} Let $Y$ be a subsurface of $\Sigma$ of complexity $1$. Then:
\begin{enumerate} 
\item We say that a simple closed curve $a$ intersecting $Y$ is \emph{tight} in $Y$ if the number of intersection points of $a$ with $\partial Y$ is minimal among all those curves in its isotopy class.
\item For a curve $a$ that is tight in $Y$ but $a\not\subset Y$, we shall call each component of $a\cup Y$ an \emph{essential arc} in $Y$.
\item We say that two curves $\alpha, \beta$ intersecting $Y$ are \emph{tight} in $Y$ if each of them is tight in $Y$ and there is no region $R\subset Y$  homeomorphic to a disc which is limited by an arc $a\subset \alpha$ another arc $b\subset \beta$ and eventually another $d\subset \partial Y$.
\item Two arcs $a$ and $b$ contained in $Y$ are \emph{tight} if they are the intersections of two curves $\alpha$ and $\beta$ that are tight in $Y$. 
\end{enumerate}
\end{defn}

\begin{rem} The existence of a region $R$ homeomorphic to a disc being limited by two arcs $a\subset \alpha, b\subset \beta$ and eventually $d\subset \partial Y$, implies the existence of an isotopy of $(Y,\partial Y)$ that moves $\alpha$ and eliminates two intersection points in $\alpha \cap \beta$ in the first case or one intersection point in $(\alpha\cap\beta)\cup (\alpha\cap \partial Y)\cup (\beta \cap \partial Y)$. 
\end{rem}

\subsection{Pants decomposition} 

A \emph{pants decomposition} of a surface $\Sigma$ is a maximal multicurve $\nu \subset \Sigma$. The resulting decomposition $\Sigma \setminus \nu$ consists on $2g-2+r$ $3$-holed spheres. The number of curves that define a pants decomposition in $\Sigma$ is $\kappa(\Sigma)$.

\subsection{Elementary moves} Given two pants decompositions $\nu_{1}$ and $\nu_{2}$ we say that they are related by an \emph{elementary move} if one is obtained from the other by changing one curve $a$ by another $b$ that intersects $a$ minimally: if $\Sigma \setminus (\nu \setminus a)$ contains a one holed torus $b$ intersects $a$ in one single point and this elementary move is of the \emph{first kind}, otherwise if $\Sigma \setminus (\nu \setminus a)$ contains a four holed sphere $b$ intersects $a$ in two points and this elementary move is of the \emph{second kind}. We shall write $\nu_{1}\to \nu_{2}$ to denote that $\nu_{1}$ and $\nu_{2}$ are related by an elementary move.

\begin{rem} Observe that both the $1$-holed torus in one case as the $4$-holed sphere in the other,  are isotopic to a regular neigbourhood of $a\cup b$, where $a$ and $b$ are the curves involved in an elementary move.
\end{rem}

\subsection{Pants graph} The \emph{pants graph} $\mathcal{P}$ is the graph whose vertices are pants decompositions and two vertices are joined by an edge whenever the are related by an elementary move. The pants graph is the $1-$squeleton of the pants complex $\mathcal{PC}$ that is known to be connected and simply connected \cite{H,HT}. We endow the pants graph with a metric by assigning length $1$ to each edge. A \emph{geodesic arc} in $(\mathcal{P},d)$ is an edge path joining two vertices that minimizes distance. In general a \emph{geodesic} $g\subset \mathcal{P}$ is a set of consecutive vertices $\{v_{i}:i\in J\}$ together with the corresponding edges such that for each pair $(v_{r},v_{s})$ there is a geodesic arc joining them and lying in $g$. 

A subgraph $\mathcal{G}$ of $\mathcal{P}$ is \emph{totally geodesic} if any geodesic arc with its end points in $\mathcal{G}$ is contained in $\mathcal{G}$. A subgraph $\mathcal{G}$ of $\mathcal{P}$ is \emph{convex} when any two points of $\mathcal{G}$ may be joined by a geodesic arc contained in $\mathcal{G}$. Totally geodesic implies convex but the converse is not true, for instance if we look at the Farey graph $\mathcal{F}$ as in figure \ref{Farey} and consider the subgraph $\mathcal{G}$ that includes all the vertices in the interval $[-1,1]$, and as edges only those in $\mathcal{F}$ connecting these vertices.

\subsection{Subgraphs of the pants graph}

Given a multicurve $Q\subset \Sigma$ there is a set of pants decompositions of $\Sigma$ having these multicurve as a subset. If we consider the subgraph $\mathcal{P}_{Q}\subset \mathcal{P}$ formed by this set of vertices together with the corresponding edges in $\mathcal{P}$ we have a subgraph representing all the pants decomposition that contain the curve $Q$. The distance measured inside $\mathcal{P}_{Q}$ is noted $d_{Q}$. Also given a subsurface $Y\subset \Sigma$ we may refer to the pants graph $\mathcal{P}_{Y}$ of $Y$. Observe that for any two different multicurves $Q$ and $Q'$ having $Y$ as complementary surface of complexity grater than $0$, there are two copies of $\mathcal{P}_{Y}$ embedded in $\mathcal{P}$.

\subsection{Subsurface projections} Let $Y$ be a essential subsurface of complexity $1$ and let $a\subset \Sigma$ be a curve that is tight with respect to $Y$ and let $\{a_{i}:i=1,\cdots,k\}$ be the finite set of connected components of $a\cap Y$. The projection $\pi_{Y}(a)$ is the set of curves $\{c_{i}:i=1,\cdots,n\}$ where $c_{i}$ is the only curve contained in $Y$ that does not intersect $a_{i}$. In case of $a\cap Y=\varnothing$ we set $\pi_{Y}(a)=\varnothing$. This definition extends naturally to the case of $\omega$ being a multicurve, thus resulting $\pi_{Y}(\omega)$ as a finite set. 

More generally, if $Q$ is a $n$-handle multicurve and $Y_{1},\dots,Y_{n}$ the family of all complexity $1$ subsurfaces contained in $\Sigma \setminus Q$ we may define $\pi_{Q}(\omega)$ for any multicurve $\omega$ as the set $\{a_{i}\cup Q: a_{i}\in \pi_{Y_{i}}(\omega)\}$. So those $Y_{i}$ such that $\omega \cap Y_{i}=\varnothing$ do not contribute with any curve $a_{i}\subset Y_{i}$ to the projection, in particular if $\omega \cap (Y_{1}\cup \cdots \cup Y_{n})=\varnothing$ then $\pi_{Q}(\omega)=Q$.

\begin{rem} We shall consider projections $\pi_{Y}(a)$ that are not isotopic to $\partial Y$.
\end{rem}

\subsection{Seams and waves} In case that $Y\subset \Sigma$ is a $4-$holed sphere we call a \emph{wave} an essential an arc $w\subset Y$ with its end points lying at the same boundary component of $Y$. In case of an essential arc $s\subset Y$ that has its end points at different components we call it a \emph{seam}.

\subsection{Partial projections} Let $s$ be a seam in a $4$-holed sphere $Y$ and let $\delta_{1}$ and $\delta_{2}$ be the boundary components of $Y$ in which the end points of $s$ lie. If $N$ is a regular neighborhood of $\delta_{1} \cup s$ then $a=\partial N$ is a wave having their end points at $\delta_{2}$ and such that $\pi_{Y}(a)=\pi_{Y}(s)$. This we call the \emph{partial projection} of $s$ over $\delta_{1}$. The same thing we may do for the component $\delta_{2}$ having another wave $b$ that verifies $\pi_{Y}(b)=\pi_{Y}(s)$.

\subsection{Notation} Al along this note we shall denote by $\Sigma$ an orientable surface of genus $g$ with $r$ boundary components and $Q\subset \Sigma$ is an $n$-handle multicurve. Given two curves $\alpha,\beta \subset \Sigma$ use the expression $|\alpha \cap \beta|$ or $|\alpha\cap \beta \cap Y|$ to mean the cardinality of the set of intersection points of the curves $\alpha$ and $\beta$ or that of this set inside a subsurface $Y\subset \Sigma$.  By $\iota (\alpha,\beta)$ we denote the \emph{geometric intersection number} of $\alpha$ and $\beta$, that is, the minimum number of intersection points in their homotopy classes.

\mathstrut

\section{$2$-handle multicurves}

 Even in the case of a $2$-handle multicurve $Q\subset \Sigma$ the projection $\pi:\mathcal{P}\to \mathcal{P}_{Q}$ we may not assure that $\pi$ is distance non-increasing as the authors ask in \cite{APK2}. In the example shown in figure \ref{Multi1} we have considered a $2$-handle multicurve $Q$ whose complexity $1$ subsurfaces $Y_{1}$ and $Y_{2}$ are $4$-holed spheres. 

\begin{figure}[h]
\begin{center}
\scalebox{0.27}{\includegraphics{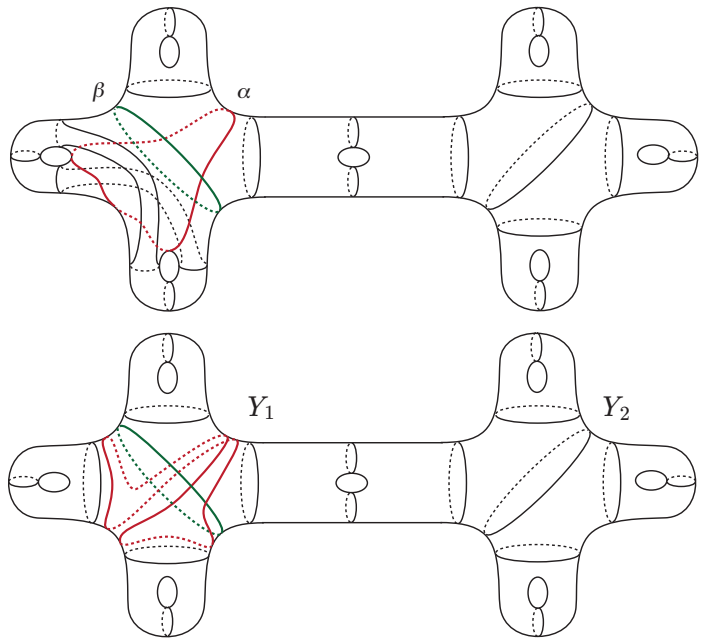}}
\end{center}
\caption{The top figure shows the closed genus $7$ surface $\Sigma$ with two pants decompositions $\nu_{0}$ and $\nu_{1}$ related by an elementary move that is realized by the curves $\alpha \in \nu_{0}$ and $\beta \in\nu_{1}$, where $|\alpha\cap \beta|=2$. The figure below shows the $4$-holed spheres $Y_{1}$ and $Y_{2}$ that are disjoint of the $2$-handle multicurve $Q$. Projections $\omega_{0}=\pi_{Q}(\nu_{0})$ and $\pi_{Q}(\nu_{1})$ are at distance $2$ and there is no $\omega \in \pi_{Q}(\nu_{1})$ at lower distance from $\omega_{0}$.}
\label{Multi1}
\end{figure}

To prove Theorem \ref{T1} we analyze the intersections of curves that realize an elementary move with the complexity $1$ subsurfaces.

\section{Intersection of curves on a $1$-holed torus}

\begin{defn} Let $\alpha$ and $\beta$ be two curves related by an elementary move and intersecting a $1$-holed torus $Y\subset \Sigma$ in one single essential arc each, say $a\subset Y$ and $b\subset Y$. We say that the end points of $a$ and $b$ are \emph{linked} if they are linked in $\partial Y$.
\end{defn}

\begin{lem}\label{lk} Let $a$ and $b$ two different essential arcs contained in a $1$-holed torus $Y$ and having their end points at $\partial Y$. If $a$ and $b$ are tight in $Y$ then their end points are linked. 
\end{lem}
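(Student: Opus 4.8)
The plan is to cut $Y$ along $a$ and read off the linking condition from the boundary of the resulting piece. First I would record that an essential arc $a$ in a $1$-holed torus is non-separating: a separating arc would cut off a disc and hence be boundary-parallel, contradicting essentiality. Since $\chi(Y)=-1$ and cutting along a properly embedded arc raises the Euler characteristic by $1$, the cut surface $A:=Y\setminus a$ is a connected orientable surface with $\chi(A)=0$ and non-empty boundary, that is, an annulus. The two endpoints of $a$ split $\partial Y$ into two arcs $P$ and $R$, and the two copies $a',a''$ of $a$ produced by the cut lie on $\partial A$. I would then check that the only gluing compatible with $A$ being an orientable annulus (rather than a one-boundary surface, which is impossible for orientable $Y$) puts $P$ together with one copy of $a$ on one boundary circle $\partial_+A$ and $R$ together with the other copy on the second boundary circle $\partial_-A$. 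The upshot is a clean reformulation: the endpoints of $b$ are linked with those of $a$ exactly when $b_1\in P$ and $b_2\in R$, i.e. exactly when $b_1$ and $b_2$ lie on different boundary components of the annulus $A$.

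Next I would analyze $b$ inside $A$. Cutting $b$ at its intersection points with $a$ breaks it into sub-arcs $\beta_1,\dots,\beta_k$ properly embedded in $A$. Here tightness does the work: if some $\beta_i$ had both endpoints on the same boundary component it would be boundary-parallel, cutting off a disc whose boundary consists of $\beta_i\subset b$ together with an arc of $\partial A$ lying in a copy of $a$ and possibly an arc of $P$ or $R\subset\partial Y$; back in $Y$ this disc is precisely a region of the kind forbidden in the definition of tightness (a bigon between $a$ and $b$, or a disc also meeting $\partial Y$). Hence every $\beta_i$ is essential in $A$, and an essential arc in an annulus is a spanning arc joining the two boundary circles. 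Now I track the boundary component visited as one traverses $b=\beta_1\cdots\beta_k$. Each $\beta_i$ starts and ends on opposite components because it spans; and at each interior intersection point the two consecutive sub-arcs meet along the two distinct copies $a',a''$ of $a$, which lie on opposite components, so passing through a crossing also switches the component. Composing these two sign changes, an elementary induction shows that every $\beta_i$ begins on the component of $b_1$ and ends on the opposite one; in particular $b_2$ lies on the component opposite to $b_1$. By the reformulation above this says exactly that the endpoints of $a$ and $b$ are linked.

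The main obstacle is the bookkeeping in the first paragraph: pinning down that $P$ and $R$ land on different boundary circles of $A$, which is where orientability enters and where the equivalence ``linked $\iff$ endpoints on opposite components'' is established. Once this is in place the counting argument is forced, and it uses tightness only through the single fact that no sub-arc of $b$ is inessential; the same count also covers the degenerate case $k=1$ (empty interior intersection), where $b$ is a single spanning arc and is therefore again linked with $a$. A minor point to verify along the way is that an essential arc in an annulus is necessarily spanning, so that for the $\beta_i$ the properties ``essential'' and ``connecting the two boundary circles'' coincide.
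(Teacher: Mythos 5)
Your first paragraph and the overall route (cut along $a$, identify $A$ as an annulus with boundary circles $a'\cup P$ and $a''\cup R$, reformulate linking, then show every piece of $b$ spans $A$) follow the paper's proof closely, and you carry out the annulus bookkeeping more carefully than the original does. The genuine gap is in your second paragraph, in the assertion that a boundary-parallel $\beta_i$ cuts off a disc which ``is precisely a region of the kind forbidden in the definition of tightness.'' That is not true in general: the arc $\delta\subset\partial A$ completing the disc may pass through \emph{both} corner points of the boundary circle, so the disc can be a quadrilateral bounded by an arc of $b$, \emph{two} arcs of $a$ and one arc of $\partial Y$, or (when $\beta_i=b$) by $b$, two arcs of $\partial Y$ and an entire copy of $a$. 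The paper's tightness condition only forbids discs bounded by one arc of each curve plus at most one arc of $\partial Y$, so neither quadrilateral is excluded, and your contradiction evaporates in exactly these cases.

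This gap is not cosmetic, because the statement your argument actually proves --- essential $+$ tight $+$ distinct $\Rightarrow$ linked --- is false. Take $b$ to be a parallel push-off of $a$: the two arcs are distinct, essential, disjoint, and tight in the sense of the definition (the only complementary disc region is the square bounded by $a$, $b$ and two arcs of $\partial Y$), yet their endpoints are unlinked; in $A$ this $b$ is precisely a boundary-parallel arc of the quadrilateral type your case analysis misses. The lemma is true only when ``different'' is read as ``non-isotopic,'' which is how the paper reads it (its proof considers ``any essential arc $b$ that is not isotopic to $a$''), and any correct proof must use non-isotopy somewhere; yours never does. To repair the argument: in the unlinked case your traversal (or a simple endpoint count) produces a $\beta_i$ parallel to the boundary circle carrying both endpoints of $b$; take the one cutting off an innermost disc. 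A bigon or triangle then contradicts tightness; a disc with $\delta$ contained in $P$ or $R$ contradicts essentiality of $b$; the crossing-free quadrilateral exhibits an isotopy from $b$ to $a$ and so contradicts non-isotopy; and the quadrilateral whose $b$-side ends at two crossings has $R$ on its boundary, so the piece of $b$ emanating from the endpoint of $b$ lying on $R$ enters its interior, contradicting innermostness.
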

\begin{proof} Call $x_{1},x_{2}\in \partial Y$ the couple of end points of $a$ and $y_{1}, y_{2}$ those of $b$. Consider $A=Y\setminus a$, then as $a$ is essential it is homeomorphic to an annulus and let $A_{1}$ and $A_{2}$ be their boundary components. Let $a_{1}\subset A_{1}$ and $a_{2}\subset A_{2}$ be the arcs that correspond to $a$ and call $x'_{1},x'_{2}\in a_{1}$ and $x''_{1},x''_{2}\in a_{2}$ the end points corresponding to $x_{1}$ and $x_{2}$. 

Any essential arc $b\subset Y$ (may be intersecting $a$ or not) that is not isotopic to $a$ reflects in $A$ as a (finite) collection of disjoint arcs $b_{i}$ with their end points fixed, each one of them connecting the boundary components of $A$. As $b\subset Y$ has one single connected component and has its end points at $\partial Y$, results that only two arcs, say $b_{1}$ and $b_{r}$ have one end point each at $A_{1}\setminus a_{1}$ and $A_{2}\setminus a_{2}$, this points are $y_{1}\in A_{1}$ and $y_{2}\in A_{2}$. All the rest of end points of arcs $b_{i}$ including one of $b_{1}$ and one of $b_{r}$ lie on $a_{1}\cup a_{2}$ and correspond to points of $a\cap b$. To see that the couples $(x_{1},x_{2})$ and $(y_{1},y_{2})$ are linked identify the segments $a_{1} \sim a_{2}$ in such way that $x'_{1} \sim x''_{1}$ and $x'_{2} \sim x''_{2}$. 

This identification is equivalent to attaching a $2$-dimensional handle $[0,1]\times[0,1]$ at $A$ in such way that $[0,1]\times \left\{0\right\}$ identifies with $a_{1}$ and $[0,1]\times \left\{1\right\}$ identifies with $a_{2}$. We may distinguish the points $\left\{0\right\}\times \left\{1/2\right\}$ and $\left\{1\right\}\times \left\{1/2\right\}$ as corresponding to $x_{1}$ and $x_{2}$. The resulting boundary component is divided in four segments, two of them corresponding to the attached handle and separating the other two.
\end{proof}

\begin{cor}\label{3atm} Let $Y$ be a $1$-holed torus. Let $\alpha,\beta \subset \Sigma$ be two curves that are tight in $Y$, each intersecting $Y$ in a finite set of arcs and both intersecting in one point inside $Y$. Let $A$ be the connected component of $(\alpha \cup \beta)\cap Y$ that contains the intersection point and let $N$ be a regular neighborhood of $A$ then the arcs $d_{i}, i=1,\dots ,4$ are two by two related by an isotopy of arcs that keeps their end points in $\partial Y$.    
\end{cor}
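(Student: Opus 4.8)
The plan is to read off $A$ as an \emph{X-shape}: since $\alpha$ and $\beta$ meet in a single point $p$ inside $Y$, the component $A$ containing $p$ is the union of exactly one arc $a\subset\alpha$ and one arc $b\subset\beta$ crossing transversally at $p$, with four free endpoints $x_1,x_2$ (of $a$) and $y_1,y_2$ (of $b$) on $\partial Y$. First I would invoke Lemma~\ref{lk}: as $a$ and $b$ are tight, their endpoints are linked, so the cyclic order on $\partial Y$ is $x_1,y_1,x_2,y_2$. Since $A$ is a tree (one $4$-valent vertex and four univalent vertices), its regular neighbourhood $N$ is a disc, and $\partial N$ decomposes into four short arcs lying on $\partial Y$ (one around each $x_i,y_j$) alternating with the four interior arcs $d_1,\dots,d_4$, where $d_1$ crosses the quadrant between the legs to $x_1$ and $y_1$, $d_2$ between $y_1$ and $x_2$, and so on cyclically; each $d_i$ has both endpoints on $\partial Y$.

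The core of the argument is an Euler characteristic count for the complement $C=\overline{Y\setminus N}$. Since $N\cap C=d_1\sqcup\dots\sqcup d_4$ is four disjoint arcs, inclusion--exclusion gives $\chi(C)=\chi(Y)-\chi(N)+\chi(N\cap C)=-1-1+4=2$. As $C$ has non-empty boundary (it contains the $d_i$) and has no closed component ($Y$ being connected), every component of $C$ has Euler characteristic at most $1$, so $C$ must be a disjoint union of exactly two discs. The boundary circle of each such disc alternates between sub-arcs of the $d_i$ and sub-arcs of $\partial Y$, so each disc carries the same number of $d_i$ as of $\partial Y$-arcs; the two possibilities are a $(1,3)$ split or a $(2,2)$ split.

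I would rule out the $(1,3)$ split by showing every $d_i$ is \emph{essential}. A component of $C$ meeting $N$ in a single arc $d_i$ would, after enlarging it across the two legs of the X bounding that quadrant, become a disc bounded by a sub-arc of $a$, a sub-arc of $b$ and an arc of $\partial Y$ --- precisely the region $R$ excluded by tightness in the Remark following the tightness definition. Here the linking from Lemma~\ref{lk} is exactly what guarantees that each quadrant arc genuinely joins a linked pair of endpoints and hence does not degenerate into such a region. With all four $d_i$ essential, no disc of $C$ can meet $N$ in a single arc, so the split is forced to be $(2,2)$.

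Finally, on each of the two discs of $C$ the two arcs $d_i,d_j$ appearing on its boundary cobound that disc, the remaining boundary lying in $\partial Y$; sliding $d_i$ across the disc to $d_j$ is then an isotopy of arcs whose endpoints move within $\partial Y$. This exhibits the four arcs as two pairs of mutually isotopic arcs, which is the assertion. I expect the delicate point to be the bookkeeping of the two middle paragraphs, namely confirming that $C$ splits as two discs with the $(2,2)$ distribution rather than $(1,3)$; this is exactly where tightness and the linking of Lemma~\ref{lk} are indispensable, and reconciling the cyclic order of the legs around $p$ (forced by transversality) with the linked cyclic order on $\partial Y$ (forced by Lemma~\ref{lk}) is what makes the pairing of opposite arcs $\{d_1,d_3\}$ and $\{d_2,d_4\}$ come out correctly.
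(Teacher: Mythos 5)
Your overall strategy (cut $Y$ along $N$ and analyse the complement $C=\overline{Y\setminus N}$) is genuinely different from the paper's, which never looks at the complement: the paper shows no three of the $d_{i}$ can be isotopic (a regular-neighbourhood argument) and then caps the number of isotopy classes by the absence of a $K_{4}$ in the Farey graph. Your route could work, but as written it has a genuine gap at the step ``every component of $C$ has Euler characteristic at most $1$, so $C$ must be a disjoint union of exactly two discs.'' The count $\chi(C)=2$, together with every component having nonempty boundary, only forces \emph{at least} two disc components; it does not bound the number of components. Two discs plus an annulus ($1+1+0$) or three discs plus a once-holed torus ($1+1+1-1$) are equally consistent with the bookkeeping. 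This is not a vacuous worry: your inference uses nothing about tightness, and without tightness it is actually false. Take $a$ and $b$ to be two isotopic (say slope-zero) essential arcs whose endpoints alternate on $\partial Y$, so that they are forced to cross exactly once; then a parallel slope-zero simple closed curve survives in $Y\setminus N$, so $C$ has an essential annulus component, yet $\chi(C)=2$ exactly as before. Hence no Euler-characteristic argument alone can deliver ``two discs''; tightness must enter \emph{before} the component count, not only afterwards to discriminate $(1,3)$ from $(2,2)$.

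The repair lies inside your own proof, reordered. Every boundary circle of a component of $C$ alternates $d$-arcs with arcs of $\partial Y$ (no complete circle of $\partial Y$ survives, since $N$ meets $\partial Y$ in four arcs), so each such circle contains at least one $d_{i}$, and there are only four of them. One then checks that in each spurious configuration above, some disc component of $C$ is bounded by a single $d_{i}$ together with a single arc of $\partial Y$: in the two-discs-plus-annulus case all four boundary circles are of this form, and similarly in the $(1,1,1,-1)$ case. Your region-$R$ argument (enlarge such a disc across the two half-legs of the X to get a disc bounded by a sub-arc of $\alpha$, a sub-arc of $\beta$ and an arc of $\partial Y$, contradicting tightness) excludes precisely these components; applied \emph{first}, it kills the extra-component configurations as well as the $(1,3)$ split, leaving exactly two discs with two $d_{i}$ apiece, after which your sliding argument finishes the proof. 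With that reordering your argument is correct and, in my view, more self-contained than the paper's, since it avoids the Farey-graph clique bound entirely (note that the paper's own ``triangle'' claim is shaky, as $\pi_{Y}(a)$ and $\pi_{Y}(b)$ intersect twice, not once). One caveat: you obtain that the $d_{i}$ fall into two isotopic pairs, whereas the paper's proof also records that the two classes are distinct from each other and from those of $a$ and $b$, which is the form actually invoked later in Proposition \ref{pro1}; this needs one more line in your setup (for instance, two isotopic arcs lying in the boundaries of the two different complementary discs would force $a$ and $b$ to be isotopic, which tightness and the single crossing forbid).
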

\begin{proof} Let $a=\alpha \cap Y$ and $b=\beta \cap Y$ be the arcs that contain the intersection point $\alpha \cap \beta \subset Y$. Consider also the arcs $d_{i},i=1,\dots ,4$. As the curves $\alpha$ and $\beta$ are tight in $Y$, no arc $d_{i}$ is isotopic to $a$ or $b$, then if $d_{i}$ and $d_{j}$ are isotopic in $Y$ with their end points in $\partial Y$ the isotopy may be considered also in $Y\setminus N$. So in case of three arcs $d_{i}$ being isotopic there would be one of them between the other two, so contradicting $N$ being a regular neighborhood. Assuming the four arcs $d_{i}$ are isotopic would take us to $a$ and $b$ being isotopic too, which is false.

Each pair of of arcs have their end points linked and constructing the projection over $\mathcal{P}_{Y}$ of one of them consists on joining its end points by one arc parallel to $\partial Y$. 

It results that $\pi_{Y}(a),\pi_{Y}(b)$ and $\pi_{Y}(d_{i})$ are the vertices of a triangle on the Farey graph $\mathcal{P}_{Y}$. In case of having a fourth isotopy class different of the former we would have obtained four vertices in    $\mathcal{P}_{Y}$ at distance $1$ any two of them, which is impossible.
\end{proof}

\begin{lem}\label{prt} Let $\alpha$ and $\beta$ be two curves in $\Sigma$ realizing an elementary move $\nu_{0}\to \nu_{1}$ and tight in $Y$. Let $A\subset Y$ be a connected component of $(\alpha\cup \beta)\cap Y$ and let $N$ be a regular neighborhood of $A$. Then there is a curve $d\subset \partial N$ such that $d_{Y}(\pi_{Y}(a),\pi_{Y}(d))\leq 1$ for any arc or curve $a\subset \left\{\alpha\cap Y \right\}\cup \left\{\beta\cap Y \right\}$
\end{lem}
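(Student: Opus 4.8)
The plan is to transfer everything to the combinatorics of the Farey graph $\mathcal{P}_Y$ and to exploit systematically that the arcs cut out of $Y$ by $\alpha$ and by $\beta$ are pairwise disjoint. The key preliminary observation I would record is the following clique principle: if two essential arcs in the complexity $1$ surface $Y$ are disjoint, then either they are isotopic or their projections are joined by an edge of $\mathcal{P}_Y$. Indeed, disjoint essential arcs cannot cross, so after passing to slopes their projections are either equal or Farey neighbours; consequently a family of pairwise disjoint essential arcs projects to a set of pairwise adjacent vertices, i.e. a clique, and since $\mathcal{P}_Y$ is the Farey graph such a clique has at most three vertices and diameter at most $1$. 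This is the mechanism that keeps all the relevant projections confined to a single Farey triangle.

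With this in hand I would first treat $\alpha$ and $\beta$ separately. The components of $\alpha\cap Y$ are subarcs of the single simple closed curve $\alpha$, hence pairwise disjoint; by the clique principle their projections span a clique of $\mathcal{P}_Y$, so they all lie within $d_Y$-distance $1$ of a common vertex $c_\alpha$. Likewise every arc of $\beta\cap Y$ projects within distance $1$ of a vertex $c_\beta$. Thus the whole set $\{\pi_Y(a):a\subset(\alpha\cup\beta)\cap Y\}$ is contained in the union of two radius-$1$ balls centred at $c_\alpha$ and $c_\beta$, and it remains to exhibit a single boundary curve $d\subset\partial N$ whose projection dominates both balls at distance $1$.

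The connection between the two centres is where the hypotheses on $A$ and on the elementary move enter. If $A$ carries no intersection point it is a single arc, $N$ is a band, and the essential boundary curve $d$ of $N$ projects to that arc's projection, which already lies in the common clique. If $A$ carries the intersection of $\alpha$ and $\beta$ inside $Y$, then in the $1$-holed torus case I would apply Corollary \ref{3atm} directly: the four frontier arcs $d_i\subset\partial N$ project, together with $c_\alpha$ and $c_\beta$, to vertices of a single Farey triangle, so choosing $d=d_i$ realising any vertex of that triangle gives $d_Y(\pi_Y(d),\pi_Y(a))\le 1$ for every arc $a$. In the $4$-holed sphere case I would first use the partial projection construction to replace every seam in $(\alpha\cup\beta)\cap Y$ by a wave with the same projection, reducing the analysis to waves, and then argue that the frontier of $N$ runs parallel to $A$ and can be isotoped off every arc not contained in $A$, so that the resulting boundary curve $d$ is disjoint from, or minimally intersects, all the arcs of $\alpha\cap Y$ and $\beta\cap Y$; the clique principle then again yields distance at most $1$.

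The main obstacle I anticipate is the $4$-holed sphere case, for two reasons. First, an arc may be a wave rather than a seam, and a single curve $\alpha$ may produce both; making the wave/seam bookkeeping compatible with a single choice of $d$ forces the systematic use of partial projections so that a uniform triangle argument applies. Second, because $\alpha$ and $\beta$ meet minimally there are at most two intersection points inside $Y$, and these may be split between two distinct components $A$; one must then check that the boundary curve associated to the prescribed component $A$ still dominates the projections coming from the \emph{other} intersection point. The rigidity that ultimately forces success is precisely that a clique of $\mathcal{P}_Y$ has at most three vertices, so once $c_\alpha$, $c_\beta$ and the candidate $\pi_Y(d)$ are shown to be pairwise adjacent there is no room left for a projection at distance $2$; verifying that $\partial N$ actually realises this third vertex is the delicate geometric step.
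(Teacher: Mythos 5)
Your proposal's central tool, the ``clique principle'' stated for an arbitrary complexity $1$ subsurface, is false when $Y$ is a $4$-holed sphere, and that is exactly where your argument leans on it. Two disjoint, non-isotopic essential arcs of a $4$-holed sphere can have projections at distance $2$: take a seam $s$ joining $\delta_{1}$ to $\delta_{2}$ and a second seam $s'$ joining the same two boundary components but wrapping around a third one, so that $\pi_{Y}(s')$ is the image of $\pi_{Y}(s)$ under a twist along a curve meeting $\pi_{Y}(s)$ twice, whence $\iota(\pi_{Y}(s),\pi_{Y}(s'))=4$. This failure is precisely why the paper's Lemma \ref{prs} carries the hypothesis that the number of common boundaries is at most $1$, and why the ``special couple'' configuration is excluded from the $4$-holed sphere analogue (Lemma \ref{ml}) and handled by a completely separate argument (Lemma \ref{sc} and Theorem \ref{TP}). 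Indeed, Lemma \ref{prt} is stated and proved in the paper only for $Y$ a $1$-holed torus: it sits in that section and its proof rests on the linking Lemma \ref{lk}. Read the way you read it, with $Y$ a $4$-holed sphere, the statement itself is false for a special couple $A=(\alpha\cap Y)\cup\beta$ with $\beta\subset Y$: the essential arcs of $\partial N$ are then seams disjoint from $\beta$, hence project to $\beta$ itself, which is at distance $2$ from $\pi_{Y}(\alpha\cap Y)$, and the remaining components of $\partial N$ are boundary-parallel. Your proposed repair via partial projections does not help: replacing seams by waves preserves projections but not disjointness, and the wave analogue of the clique principle fails for the same reason.

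There is a second gap inside the legitimate ($1$-holed torus) case: you never establish that $\partial N$ contains an essential arc, and you omit the configuration in which both intersection points of a second-kind elementary move lie in the same component $A\subset Y$; Corollary \ref{3atm}, which you invoke, treats only one intersection point. When $A$ contains two intersection points it can happen, for an arbitrary pair of curves meeting twice, that every arc of $\partial N$ is boundary-parallel in $Y$, and then no arc of $\partial N$ has an admissible projection at all. This is the one place where the hypothesis that $\alpha$ and $\beta$ realize an elementary move is genuinely needed, and it is the crux of the paper's proof: if every arc of $\partial N$ were inessential, $Y$ would isotope into $N$, so $N$ would contain an embedded $1$-holed torus, contradicting the fact that a regular neighborhood of two curves realizing a second-kind move is a $4$-holed sphere. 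Your proposal uses the elementary move only to bound the number of intersection points, so this step is missing. By contrast, once essentiality of an arc $d\subset\partial N$ is secured, your disjointness argument for the torus is sound and is essentially the paper's own: $d$ is disjoint from every arc of $(\alpha\cup\beta)\cap Y$, and disjoint essential arcs in a $1$-holed torus have linked endpoints, hence equal or Farey-adjacent projections, by Lemma \ref{lk}.
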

\begin{proof} Let $A$ be the connected component in the statement. Assume first that $A$ just contains at most one intersection point of $\alpha \cap \beta$ and none of the curves $\alpha$ or $\beta$ is contained in $Y$, then consider a regular neighborhood $N\subset Y$ of $A$. 
With this initial assumption it results that any arc $d\subset \partial N\subset Y$ is essential as all the arcs $\alpha \cap Y$ and $\beta\cap Y$ contained in $A$ are tight. For any arc $a\subset \left\{\alpha\cap Y \right\}\cup \left\{\beta\cap Y \right\}$ it results by lemma \ref{lk} that the end points of $a$ and $d$ are linked. In case that one of the former curves, say $\alpha$ is contained in $Y$ we have $\iota (\alpha, \beta)=1$, the neighborhood $N$ has two essential arcs $d_{1},d_{2}\subset Y$ each projecting to $\alpha$ and then $d_{Y}(\pi_{Y}(d_{i}),\pi_{Y}(\beta))=1$. 

It rests to consider the case in which $\alpha$ and $\beta$ intersect in two points inside $Y$. As in the previous case we ask to $\partial N$ to have some arc to be essential in $Y$, and proceed as before, but as in that case we might assure the existence of such an arc without appealing $\alpha$ and $\beta$ to be related by an elementary move. If we drop this condition here, it may happen that $\partial N$ has no such arc and then it results, after an isotopy, $N$ having a holed torus embedded. This would contradict the fact of $N$ being a $4$-holed sphere when there is an elementary move relating two curves that intersect in two points. 
\end{proof}

\mathstrut

\section{Intersection of curves on a $4$-holed sphere}

In the particular case of $Y$ being a $4$-holed sphere we precise of some  previous definitions. All along this section $Y$ will denote a $4$-holed sphere.

\begin{defns} 
\begin{enumerate}
\item Consider a seam $s$ together with a curve $c$ contained in $Y$ such that $\iota(s\cap c)=2$, then we say that $s$ and $c$ are a \emph{special couple} in $Y$.
\item For two different seams $s,t \subset Y$, we shall call the \emph{number common boundaries} of $s$ and $t$, the number of connected components of $\partial Y$ containing one end point of each of $s$ and $t$.
\end{enumerate}
\end{defns}

An example of special couple is shown in figure \ref{Multi1} above. This is the intersection of the connected component of $\alpha\cup \beta$ that contains their intersection points with $Y_{1}$.

\begin{rem} Given two tight curves $\alpha,\beta \subset Y$, then $d_{Y}(\alpha,\beta)$, when this number is no greater than $2$ may be measured in terms of $\iota(\alpha,\beta)$, as all elementary moves allowed for curves inside a $4$-holed sphere are of the second kind. 
\end{rem}

\begin{lem}\label{int}
\begin{enumerate}
\item Let $s,t\subset Y$ be two different seams that are tight in $Y$, then $\iota(\pi_{Y}(s)\cap t)=2\iota(s\cap t)+j$ where $j$ is the number of common bondaries of $s$ and $t$. 
\item Let $s, c \subset Y$ be a seam and a curve respectively, then $\iota(\pi_{Y}(s)\cap c)=2\iota(s,c)$
\end{enumerate}
\end{lem}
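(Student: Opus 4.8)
The plan is to give an explicit model for $\pi_Y(s)$ and read both intersection numbers off it, the real work being to certify that the resulting counts are minimal.

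Both parts rest on the same description of $\pi_Y(s)$. Writing $\delta_1,\delta_2$ for the two boundary components of $Y$ at which $s$ has its end points, the regular neighborhood $N=N(\delta_1\cup s\cup\delta_2)$ is a pair of pants whose three boundary curves are $\delta_1$, $\delta_2$ and a third essential curve; since this third curve is the unique curve of $Y$ disjoint from $s$ (it separates $\{\delta_1,\delta_2\}$ from $\{\delta_3,\delta_4\}$), it is exactly $\pi_Y(s)$. So up to isotopy I take $\pi_Y(s)=\partial N\setminus(\delta_1\cup\delta_2)$, a single closed curve that runs once parallel to $\delta_1$, once along each of the two sides of $s$, and once parallel to $\delta_2$.

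First I would fix $s$ and $t$ (resp.\ $s$ and $c$) in tight position and push the other arc/curve off the two sub-arcs of $\pi_Y(s)$ parallel to $\delta_1$ and $\delta_2$ as far as possible, so that the remaining crossings lie on the two strands running alongside $s$. For (2) a closed curve $c$ has no end points on $\partial Y$, hence can be pushed entirely off the two boundary-parallel sub-arcs; since the two remaining strands are parallel copies of $s$, each point of $s\cap c$ produces one crossing with each strand, giving $|\pi_Y(s)\cap c|=2|s\cap c|$. For (1) the same doubling along $s$ yields $2|s\cap t|$ crossings, but now $t$ is a seam and may have an end point on $\delta_1$ or $\delta_2$; an end point of $t$ on a common boundary of $s$ and $t$ lies inside $N$, so $t$ must leave $N$ across the corresponding boundary-parallel sub-arc, contributing exactly one further crossing per common boundary, which is the term $j$. (A crude parity check supports the count: as $\pi_Y(s)$ separates $\{\delta_1,\delta_2\}$ from $\{\delta_3,\delta_4\}$, the total number of crossings of $t$ with $\pi_Y(s)$ has the parity of $j$, and $2\iota(s\cap t)$ is even.)

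The main obstacle is to upgrade these counts to the geometric intersection numbers, i.e.\ to rule out removable bigons in the displayed configuration. Here I would take an innermost bigon between $\pi_Y(s)$ and $t$ (resp.\ $c$): along $s$ the two strands of $\pi_Y(s)$ are parallel to $s$, so a bigon bounded by a sub-arc of $t$ and a strand running alongside $s$ can be pushed across $s$ into a disc bounded by a sub-arc of $t$, a sub-arc of $s$ and possibly a sub-arc of $\partial Y$, which is precisely the configuration forbidden by the Remark following the definition of tightness; this contradicts tightness of $s$ and $t$ in $Y$ (resp.\ minimal position of $s$ and $c$). The only remaining bigons could involve a boundary-parallel sub-arc; such a bigon would be cut off by $\partial Y$, and removing it would drag an end point of $t$ off its boundary component, which is not permitted, so the $j$ crossings cannot be cancelled either. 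Finally the two strands alongside $s$ cannot cancel against one another because $\pi_Y(s)$ is embedded. Combining these, the configuration is already tight, whence $\iota(\pi_Y(s)\cap t)=2\iota(s\cap t)+j$ and $\iota(\pi_Y(s)\cap c)=2\iota(s,c)$.
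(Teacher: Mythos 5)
Your proposal is correct and follows essentially the same route as the paper: you model $\pi_Y(s)$ as the boundary of a regular neighborhood $N$ of $\delta_1\cup s\cup\delta_2$, read off the count $2\iota(s,t)+j$ (resp.\ $2\iota(s,c)$) from that model, and establish minimality by pushing any would-be reducing disc across $N$ to produce a disc bounded by arcs of $t$, $s$ and possibly $\partial Y$, contradicting tightness. The paper organizes the minimality step slightly differently (treating the cases $2\iota(s,t)+j=1,2$ by the separation property of $\pi_Y(s)$ before the general disc argument), but this matches the role of your parity/separation check, so the two arguments coincide in substance.
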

\begin{proof} 
\begin{enumerate}
\item Let $s$ and $t$ be two representatives that realize $\iota(s,t)$ and let $N\subset Y$ be a regular neighborhood of the complex formed by $s$ and the boundary components $\delta_{1}$ and $\delta_{2}$ where $s$ has its end points. Observe that $\partial N=\pi_{Y}(s)$.

We may select $N$ small enough to have $\iota (\pi_{Y}(s),t)=2|s\cap t|+j$, so it is at most this number. To see that it is also the lowest number assume that it is greater that $0$.

When $2|s\cap t|+j=1$ is $j=1$ and if $P_{1}\sqcup P_{2}=Y\setminus \pi_{Y}(s)$  then $t$ has one end point at $P_{1}$ and the other at $P_{2}$, and as $\pi_{Y}(s)$ separates this two subsurfaces, the number cannot be lowered.

When $2|s\cap t|+j=2$ either $j=0$, in which case $t$ has its end points at, say $P_{1}$, but also a subarc of it (that intersects $s$), lies in $P_{2}$ and for the same reason as before the number can not be lowered; or $j=2$ and $t$ has the end points at $P_{2}$ and a subarc of it lies on $P_{1}$, again the number can not be lowered.

In the remaining cases we have a collection of arcs $t_{i}$ (subarcs of $t$) with their end points at $\partial N$ and eventually some of them (at most two) with one end point at $\partial N$ and the other at $\partial Y$.

Diminishing the number $2|s\cap t|+j$ relies on finding an isotopy of $(Y,\partial Y)$ that takes $t$ to have a lower intersection number with $\partial N$. We may assume that this isotopy applies $N$ into itself and then applies  some arc $t_{i}$ with its end points in $\partial N$ inside $N$. The arc $t_{i}$ together with a subarc $d_{i}\subset \partial N$ determined by its end points would bound a disc, so by the regularity of $N$ we may extend $t_{i}$ to $s$ or any of $\delta_{1}$ or $\delta_{2}$ to bound a disc with any of this which is against $s$ and $t$ being different, tight or with the lower intersection. 
\item The proof of this point is like the previous one with the particularity that in this case the arcs of $c$ outside the neighborhood $N$ of $s$ all have their end points at $\partial N$.
\end{enumerate}
\end{proof}

\mathstrut

\begin{lem}\label{seam} Let $Y$ be a four-holed sphere. For any curve $a\subset \Sigma$ such that $a \cap Y$ has a single connected component there is a seam $s$ such that $\pi_{Y}(s)=\pi_{Y}(a)$.
\end{lem}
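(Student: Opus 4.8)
The plan is to establish the slightly stronger statement that the assignment $s\mapsto \pi_{Y}(s)$, from seams of $Y$ to vertices of the Farey graph $\mathcal{P}_{Y}$, is \emph{surjective}; applying this to the particular curve $c:=\pi_{Y}(a)$ then gives the lemma. First I would record that, by the standing tightness convention for subsurface projections, the single component $a\cap Y$ is an essential arc, hence either a seam or a wave, so that $\pi_{Y}(a)$ is a single essential simple closed curve $c\subset Y$, namely the unique essential curve of $Y$ disjoint from $a\cap Y$. (If instead $a\subset Y$ the same holds with $c=a$.) In every case it therefore suffices to produce a single seam projecting to the prescribed essential curve $c$.

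Next I would exploit the topology of the $4$-holed sphere. Any essential simple closed curve $c\subset Y$ is separating and cuts $Y$ into two pairs of pants, each carrying exactly two of the four boundary components of $Y$ as cuffs. Fix one of these pairs of pants $P$, say with cuffs $c,\delta_{i},\delta_{j}$ where $\delta_{i},\delta_{j}\subset\partial Y$, and let $s\subset P$ be an essential arc joining $\delta_{i}$ to $\delta_{j}$. Since $s$ has its two endpoints on distinct boundary components of $Y$ and is essential in $Y$, it is a seam.

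It remains to verify that $\pi_{Y}(s)=c$, and this is the one point that needs care: it is an equality of isotopy classes (Farey vertices), not merely of the induced partition $\{i,j\}\mid\{k,l\}$ of the boundary components, of which there are infinitely many curve representatives. Here I would argue inside $P$: the only essential simple closed curves of a pair of pants are its three cuffs, and $\delta_{i},\delta_{j}$ are isotopic to $\partial Y$ and thus excluded, so the sole essential curve of $Y$ disjoint from $s$ is $c$. Concretely, the dumbbell $\delta_{i}\cup s\cup \delta_{j}$ is a spine of $P$, so a regular neighbourhood $N$ of it fills $P$ up to a collar of $c$, whence the essential, non-peripheral component of $\partial N$ is isotopic to $c$; by the definition of the projection this yields $\pi_{Y}(s)=c=\pi_{Y}(a)$. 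The main obstacle is precisely this last identification of isotopy classes, which is exactly why the argument is run inside the pair of pants $P$ rather than by matching partition types; alternatively one could confirm $\pi_{Y}(s)=c$ by evaluating $\iota(\pi_{Y}(s),t)$ against test curves $t$ via Lemma \ref{int}, but the pants argument is the more direct route.
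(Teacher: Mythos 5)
Your proof is correct, but it runs the construction in the opposite direction from the paper's. The paper builds the seam directly out of $a$: after the trivial case where $a\cap Y$ is already a seam, and after reducing the case $a\subset Y$ to that of a wave, it cuts $Y$ along the wave, takes the annulus component $A$ of the complement together with the boundary circle $b\subset\partial Y$ that $A$ contains, and runs the seam across $A$ from $b$ to the other boundary component of $A$; the seam so produced is disjoint from $a$ by construction. You instead start from the target curve $c=\pi_{Y}(a)$ and prove the stronger statement that \emph{every} essential curve in $Y$ (every vertex of the Farey graph $\mathcal{P}_{Y}$) is the projection of some seam: split $Y$ along $c$ into two pairs of pants, take the essential arc joining the two cuffs of one of them lying on $\partial Y$, and identify its projection with $c$ by the dumbbell-spine argument. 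Your route is uniform (no case analysis enters the construction itself) and modular, and it can even be shortened: since the paper's definition of $\pi_{Y}$ already postulates uniqueness of the curve in $Y$ disjoint from an essential arc, exhibiting $c$ as an essential curve disjoint from your seam $s$ concludes immediately, the spine computation being an independent verification. Two remarks. First, your claim that the essential curves of $Y$ disjoint from $s$ can be enumerated by arguing \emph{inside} $P$ is loosely phrased, since such a curve a priori lies anywhere in $Y\setminus s$ (which is a single pair of pants, not $P$); your regular-neighbourhood computation covers exactly this, so it is a wording issue rather than a gap. Second, what the paper's construction buys is a seam disjoint from $a$ itself, and that disjointness is what gets used downstream (in the proof of Lemma \ref{prs} the associated seam of $b$ is taken inside a component of $Y\setminus b$); your seam is isotopic to such a seam when the pair of pants on the appropriate side of $c$ is chosen, but if the lemma is to be quoted with that extra property, it should be recorded explicitly.
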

\begin{proof} Given the curve $a$ we shall construct an associated seam $s$ as follows. In case that $a$ is itself a seam then $a=s$. So we assume that either $a\subset Y$ or $a\cap Y $ is a wave. In the first case we may construct a wave $w$ by deforming $a$ isotopically to make an arc $a'\subset a$ into one component of $\partial Y$ and as a result $\pi_{Y}(a)=\pi_{Y}(w)$. 

We then assume that $a$ is a wave. The simple closed curve $\pi_{Y}(a)$ is separating and so the wave $a$ is also separating. Consider the component $A$ of $Y\setminus a$ that is homeomorphic to an annulus and call $b$ the component of $\partial Y$ contained in $A$. We see as $a$ can be deformed by a deformation retraction onto the complex formed by $b$ and an arc $s$ that joins $b$ to the other component of $\partial A$ and having empty intersection with $a$. The arc $s$ is the seam we are looking for and $\pi_{Y}(s)=\pi_{Y}(a)$.
\end{proof}

\mathstrut

\begin{defn} The seam obtained in the last Lemma is called \emph{associated seam} to $a$.
\end{defn}

\begin{rem} If $s$ is a seam and $c$ a curve and they are special with respect to $Y$, then one of the associated seams to $c$, say $t$, has its end points at the same components that $s$ does, that is, the number of common boundaries for $s$ and $t$ is $2$. In this case we have $d_{Y}(\pi_{Y}(s),c)=2$.
\end{rem}

\mathstrut

\begin{lem}\label{prs} Let $s$ be a seam and let $b$ be any curve, wave or seam. Assume that $s$ and $b$ are tight with respect to $Y$, $s\cap b=\varnothing$ and in case of $b$ being a seam the number of common boundaries of $s$ and $b$ is at most $1$. Then $d_{Y}(\pi_{Y}(s),\pi_{Y}(b))\leq 1$.
\end{lem}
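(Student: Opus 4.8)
The plan is to translate the statement into the combinatorics of the Farey graph $\mathcal{P}_Y$ and then reduce everything to a single intersection-number estimate that the previously established Lemma \ref{int} computes for us. Recall that in $\mathcal{P}_Y$ two curves are at distance $0$ exactly when they are disjoint --- hence isotopic, since the complement of an essential curve in $Y$ is a union of pairs of pants carrying no further essential curves --- and at distance $1$ exactly when their geometric intersection number is $2$. Thus the conclusion $d_Y(\pi_Y(s),\pi_Y(b))\le 1$ is equivalent to $\iota(\pi_Y(s),\pi_Y(b))\le 2$, and this is what I would prove.

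The key reduction is the observation that $\pi_Y(b)$ is in every case a simple closed curve in $Y$ (when $b$ is a curve we read $\pi_Y(b)=b$; when $b$ is a seam or a wave it is the boundary of the appropriate regular neighbourhood). Since $s$ is a seam, Lemma \ref{int}(2), applied to the seam $s$ and the curve $c=\pi_Y(b)$, yields
\[
\iota(\pi_Y(s),\pi_Y(b))=2\,\iota(s,\pi_Y(b)).
\]
Hence the whole lemma reduces to the single inequality $\iota(s,\pi_Y(b))\le 1$, which I would establish by splitting into the three possibilities for $b$.

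If $b$ is a curve then $\pi_Y(b)=b$ and the hypothesis $s\cap b=\varnothing$ gives $\iota(s,\pi_Y(b))=\iota(s,b)=0$. If $b$ is a seam I would apply Lemma \ref{int}(1) to the two tight, disjoint seams $s$ and $b$: it gives $\iota(\pi_Y(b),s)=2\iota(b,s)+j=j$, where $j$ is the number of common boundaries of $s$ and $b$; the hypothesis $j\le 1$ then yields exactly $\iota(s,\pi_Y(b))\le 1$. Combining each with the displayed identity finishes these two cases.

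The remaining and genuinely delicate case is $b$ a wave, and this is where I expect the main obstacle to lie. Here I would pass to an associated seam $t$ furnished by Lemma \ref{seam}, so that $\pi_Y(b)=\pi_Y(t)$ and $t\cap b=\varnothing$. The point to be careful about is that Lemma \ref{int}(1) is stated for two \emph{different} seams, so one must first separate two configurations according to the position of $s$ relative to $b$. Since $b$ has both endpoints on a single boundary component $\delta$, cutting $Y$ along $b$ produces an annulus $A$ (containing one other boundary component) together with a pair of pants $P$; as $s$ is disjoint from $b$ it lies wholly in $A$ or wholly in $P$. If $s\subset A$ then $s$ is a spanning arc of the annulus, hence isotopic to $t$, so $\pi_Y(s)=\pi_Y(b)$ and the distance is $0$. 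If $s\subset P$ then $s$ and $t$ are distinct disjoint seams; because $b$ meets $\partial Y$ only along $\delta$, the seams $s$ and $t$ share at most the boundary component $\delta$, so their number of common boundaries is at most $1$, and Lemma \ref{int}(1) again gives $\iota(s,\pi_Y(t))\le 1$. In every case $\iota(s,\pi_Y(b))\le 1$, whence $\iota(\pi_Y(s),\pi_Y(b))\le 2$ and $d_Y(\pi_Y(s),\pi_Y(b))\le 1$, as required. The crux is thus the wave analysis: recognising that a seam disjoint from a wave is forced either to reproduce the wave's projection outright or to meet the projecting neighbourhood in at most one strand, the former possibility being precisely what prevents a spurious application of Lemma \ref{int}(1) to two isotopic arcs.
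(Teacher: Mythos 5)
Your proof is correct and takes essentially the same route as the paper's: both reduce the claim to showing $\iota(s,\pi_Y(\cdot))\leq 1$ via the common-boundary count in Lemma \ref{int}(1) (passing to an associated seam from Lemma \ref{seam} when $b$ is a wave or curve), and then double this bound with Lemma \ref{int}(2) to translate $\iota(\pi_Y(s),\pi_Y(b))\leq 2$ into $d_Y(\pi_Y(s),\pi_Y(b))\leq 1$. The only (cosmetic) difference is that you handle the seam and curve cases explicitly and, when $s$ lies in the annulus component of $Y\setminus b$, argue via isotopy of spanning arcs, whereas the paper instead selects the associated seam in the component of $Y\setminus b$ not containing $s$.
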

\begin{proof} If $b$ is a curve or a wave it separates $Y$ so any of its associated seams, say $t$ lies in one of the components of $Y\setminus b$ and it follows that $s\cap t=\varnothing$. We may select the associated seam $t$ lying inside the component that does not contain $s$ to ensure that their number of common boundaries is at most $1$, then by the first part of lemma \ref{int} $\iota(\pi_{Y}(t),s)\leq 1$ and by the second part of the same lemma $\iota(\pi_{Y}(t),\pi_{Y}(s))\leq 2$, which is to say $d_{Y}(\pi_{Y}(s),\pi_{Y}(b))\leq 1$. 
\end{proof}

\mathstrut

\begin{lem}\label{ml} Let $\alpha$ and $\beta$ be two curves in $\Sigma$ related by an elementary move, assume $\alpha$ and $\beta$ are tight in $Y$. Let $A$ be a connected component of $(\alpha \cup \beta)\cap Y$ that is not a special couple and consider a regular neighborhood $N$ of $A$. Then there exists an arc $\delta \in \partial N$ such that $d_{Y}(\pi_{Y}(\delta),\pi_{Y}(a))\leq 1$ for any arc or curve $a \subset \left\{\alpha\cap Y\right\}\cup\left\{\beta\cap Y\right\}$. 
\end{lem}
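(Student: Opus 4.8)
The plan is to derive the estimate for \emph{every} relevant arc from a single application of Lemma~\ref{prs}. The first observation is one of disjointness: since $\delta$ is to be chosen on $\partial N$ and $N$ is a regular neighborhood of the connected component $A$, the arc $\delta$ is disjoint from $A$; moreover, any arc or curve $a\subset\left\{\alpha\cap Y\right\}\cup\left\{\beta\cap Y\right\}$ either lies in $A$, hence in the interior of $N$, or belongs to another component of $(\alpha\cup\beta)\cap Y$, hence outside $N$. In both situations $\delta$ can be isotoped off $a$, so $\delta\cap a=\varnothing$. Passing to associated seams (Lemma~\ref{seam}) for $\delta$ and for $a$, the disjointness hypothesis of Lemma~\ref{prs} is met. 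Consequently the whole statement reduces to producing one \emph{essential} arc $\delta\subset\partial N$ whose associated seam has at most one common boundary with an associated seam of every such $a$; Lemma~\ref{prs} then delivers $d_Y(\pi_Y(\delta),\pi_Y(a))\leq 1$ for all of them at once.

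To produce $\delta$ I would argue along the lines of Lemma~\ref{prt}. Since $\alpha$ and $\beta$ realize an elementary move, $\iota(\alpha,\beta)\leq 2$, so $A$ contains $0$, $1$ or $2$ points of $\alpha\cap\beta$, and I treat these cases in turn. If $A$ carries no intersection point it is a single arc and $\partial N$ has an arc parallel to it, with $\pi_Y(\delta)=\pi_Y(A)$; if it carries one point, $N$ is a neighborhood of two transverse arcs and $\partial N$ provides four ``corner'' arcs, one of which is essential and projects to a vertex adjacent to both $\pi_Y(a)$ and $\pi_Y(b)$, exactly as in the situation of Corollary~\ref{3atm}. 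That some $\delta\subset\partial N$ is essential is guaranteed in the two-point case because then the move is of the second kind and the relevant neighborhood is a $4$-holed sphere, which cannot contain an embedded holed torus (compare the final paragraph of the proof of Lemma~\ref{prt}). For arcs $a$ lying in components other than $A$, or among the several arcs of a single one of $\alpha,\beta$, the bound follows from tightness: two disjoint tight arcs of one simple closed curve in $Y$ are either isotopic (so $\pi_Y$ agrees) or connect distinct pairs of boundary components, whence their associated seams share at most one boundary with that of $\delta$, and Lemma~\ref{int} controls the resulting projections.

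The heart of the matter, and the step I expect to be the main obstacle, is the verification of the common-boundary condition in the case where $A$ contains two intersection points, since here the projections of an arc of $\alpha$ and an arc of $\beta$ inside $A$ can a priori be at distance $2$. This is precisely where the hypothesis that $A$ is \emph{not a special couple} enters: a special couple is the unique configuration of a seam and a curve with $\iota=2$ whose associated seams share both boundary components, forcing $d_Y=2$ (see the Remark preceding Lemma~\ref{prs}). Excluding it, I would enumerate the remaining two-intersection configurations of $A$ inside the $4$-holed sphere and check directly that a suitable essential $\delta\subset\partial N$ has an associated seam with at most one common boundary with the associated seam of each arc of $\alpha$ and $\beta$ in $A$. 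With the common-boundary bound established in every case, the reduction of the first paragraph together with Lemma~\ref{prs} completes the argument.
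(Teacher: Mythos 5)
Your proposal follows essentially the same route as the paper: both arguments reduce the statement to Lemma \ref{prs} by producing an arc disjoint from $(\alpha\cup\beta)\cap Y$ whose (associated) seam shares at most one boundary component of $Y$ with the relevant arcs, with the not-special hypothesis ruling out precisely the curve-plus-seam configuration in the two-intersection-point case. The paper organizes the same case analysis via the homotopy type (point, circle, or wedge of two circles) of the complex obtained from $A$ by deleting its boundary-going arcs, rather than by your enumeration over the number of intersection points, but the mechanism and the final appeal to Lemma \ref{prs} are identical.
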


\begin{proof} Let $A$ be a component of $(\alpha \cup \beta)\cap Y$. The regular neighborhood $N$ of $A$ is a genus $0$ surface as it is embedded in $Y$. The surface $N$ may be contractible, homotopy equivalent to a circle or to a wedge of two circles, depending on the number of intersection points of $\alpha \cap \beta$ inside $Y$ and also on the separation property that the components of the former curves in $A$ satisfy with respect to $Y$. 

To see this more precisely consider the connected $1$-complex $A\subset Y$ and assume that it contains al least one intersection point of $\alpha \cap \beta$.  Delete those arcs in $A$ joining intersection points with some boundary component, the resulting complex $B$ contains the set of intersection points from $\alpha \cap \beta$ together with an arc joining them when this set contains two points. To have valence $4$ would imply that no deleting was made, but this is not possible unless $\alpha, \beta \subset Y$. It clearly results that $B$ has the homotopy type of either a point, a circle or a wedge of two circles. In any case the complex $B$ allows two boundary components of $\partial Y$ to be connected by an arc $d$ that avoids any intersection with $B$ and so also with $A$. In case that $B$ has the homotopy type of a wedge of two circles we have deleted from $A$ a couple of arcs as the vertices of $B$ have valence $3$, so one of the former curves, say $\alpha$ is contained in $Y$ and the arcs in $A$ to be deleted can not end at different components of $\partial Y$ as $A$ is not special. It follows that $d$ does not share ends at different components with any arc in $\alpha \cap A$ or $\beta \cap A$ and the result follows from Lemma \ref{prs}.  
\end{proof}

\mathstrut

\section{Geodesic paths in $\mathcal{P}$}

The main result in this section is concerned with geodesic paths in the pants graph $\mathcal{P}$. Theorem \ref{TP} is crucial to prove the main Theorem in this paper, and before proving it we deal firstly with arbitrary paths in which no elementary move is realized by an special couple.

An important argument in the proof of the next Proposition is the fact that if an elementary move $\nu_{0}\to \nu_{1}$ is realized by the curves $\alpha$ and $\beta$, the boundary curves of a regular neighborhood $N$ of $\alpha \cup \beta$ belong to both $\nu_{0}$ and $\nu_{1}$. Then in order to measure the distance between $\pi_{Q}(\nu_{0})$ and $\pi_{Q}(\nu_{1})$ we have just to look at those subsurfaces $Y_{i}\subset \Sigma\setminus Q$ where $\alpha$ and $\beta$ have nonempty intersection once they are tight. Any single arc of either $\alpha$ or $\beta$ intersecting $Y_{i}$ implies the existence of at least another isotopic to it in $Y_{i}$ as the  result of intersecting curve in $\partial N$ with $Y_{i}$, thus an arc in $\nu_{0}\cap \nu_{1}$ makes no increasing distance in the factor $\mathcal{P}_{Y_{i}}$.

\begin{prop}\label{pro1} Let $\boldsymbol{\nu}=(\nu_{0},\cdots ,\nu_{r})$ be a path in the pants graph $\mathcal{P}$ such that no pair of curves $\alpha, \beta \subset \Sigma$ realizing an elementary move $\nu_{i}\to\nu_{i+1}$ intersects a complexity $1$ subsurface $Y\subset \Sigma \setminus Q$ forming a special couple $(\alpha \cup \beta)\cap Y$. Then there exist $\omega_{0} \in \pi_{Q}(\nu_{0})$ and $\omega_{r} \in \pi_{Q}(\nu_{r})$ such that 
\[
d_{Q}(\omega_{0},\omega_{r})\leq r
\]
\end{prop}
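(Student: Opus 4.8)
The strategy is to reduce the global distance estimate in $\mathcal{P}_Q$ to a sum of local contributions, one for each complexity $1$ subsurface $Y_i \subset \Sigma \setminus Q$, and to bound each local contribution by a telescoping argument along the path. The starting observation, already recorded in the discussion preceding the proposition, is that when an elementary move $\nu_j \to \nu_{j+1}$ is realized by curves $\alpha, \beta$, the boundary curves of a regular neighborhood $N$ of $\alpha \cup \beta$ lie in both $\nu_j$ and $\nu_{j+1}$; hence the move can only change the pants decomposition \emph{within} those subsurfaces $Y_i$ that $\alpha$ and $\beta$ genuinely intersect once tightened. So the first step is to fix $Y = Y_i$, track how the projection $\pi_Y(\nu_j)$ evolves as $j$ runs from $0$ to $r$, and show that each single move advances $\pi_Y$ by at most one step in the Farey graph $\mathcal{P}_Y$.

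First I would make precise, for each index $j$, a consistent choice of projected vertex: set $\omega_j^{(i)} \in \pi_{Y_i}(\nu_j)$ for every $i$, and let $\omega_j = \bigcup_i \omega_j^{(i)} \cup Q \in \pi_Q(\nu_j)$. The key per-edge claim is that for a move $\nu_j \to \nu_{j+1}$ realized by $\alpha,\beta$ tight in $Y = Y_i$ and \emph{not} forming a special couple, one has $d_Y\!\bigl(\pi_Y(\nu_j),\pi_Y(\nu_{j+1})\bigr) \leq 1$. This is exactly where the earlier lemmas do the work: let $A$ be the connected component of $(\alpha \cup \beta)\cap Y$ containing the intersection point and $N$ a regular neighborhood. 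In the $1$-holed torus case Lemma~\ref{prt} supplies a curve $d \subset \partial N$ with $d_Y(\pi_Y(a),\pi_Y(d)) \leq 1$ for every relevant arc $a$; in the $4$-holed sphere case, since $A$ is not a special couple by hypothesis, Lemma~\ref{ml} supplies an arc $\delta \in \partial N$ with $d_Y(\pi_Y(\delta),\pi_Y(a)) \leq 1$ for every such $a$. Since the curves of $\alpha \cap Y$ and $\beta \cap Y$ are exactly the arcs governing $\pi_Y(\nu_j)$ and $\pi_Y(\nu_{j+1})$, the common witness $d$ (resp. $\delta$) shows both projections lie within distance $1$ of a single vertex, and as $\partial N \subset \nu_j \cap \nu_{j+1}$ this vertex can be taken as the projected pants curve common to both, giving the per-edge bound of $1$ in the factor $\mathcal{P}_Y$.

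Next I would assemble the factors. Because $\mathcal{P}_Q$ is the product of the Farey graphs $\mathcal{P}_{Y_1},\dots,\mathcal{P}_{Y_n}$ (the curves of $Q$ being frozen), the distance $d_Q$ is the $\ell^1$-sum of the factor distances, so it suffices to bound each $\sum_j d_{Y_i}(\pi_{Y_i}(\nu_j),\pi_{Y_i}(\nu_{j+1}))$. The crucial accounting point is that a single elementary move $\nu_j \to \nu_{j+1}$, realized by one pair $\alpha,\beta$, contributes to at most \emph{one} factor: the move changes exactly one curve of the decomposition, and $\alpha,\beta$ can intersect in an essential, distance-increasing way within at most one of the disjoint subsurfaces $Y_i$, contributing $0$ to all the others (where $\alpha \cap Y_i = \varnothing$ forces $\pi_{Y_i}$ to be unchanged). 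Summing over the $r$ edges then yields $d_Q(\omega_0,\omega_r) \leq r$ after choosing compatible endpoints $\omega_0 \in \pi_Q(\nu_0)$ and $\omega_r \in \pi_Q(\nu_r)$ by chaining the per-edge witnesses.

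\textbf{Main obstacle.}
I expect the delicate point to be the bookkeeping that turns the per-edge ``within distance $1$ of a common vertex'' statements into a genuine telescoped path of length $\leq r$ in $\mathcal{P}_Q$. Lemmas~\ref{prt} and~\ref{ml} produce, for each edge, a witness curve $d$ close to \emph{both} endpoints of that edge; but to build one connected path in the product one must verify these witnesses can be chosen consistently so that the projected vertices $\omega_j$ actually form a path with $d_Q(\omega_j,\omega_{j+1}) \leq 1$, rather than merely being pairwise close to floating auxiliary vertices. The resolution should be that $\partial N \subset \nu_j \cap \nu_{j+1}$ forces the witness to be a curve already present in both decompositions, so $\pi_Y(\partial N)$ is literally a common vertex in the factor $\mathcal{P}_Y$ and the two endpoints sit within distance $1$ of it on either side; one then either passes through this witness (costing at most $2$ per edge but collapsing across shared vertices) or, more carefully, observes that one of the two endpoints already equals the witness so the step costs at most $1$. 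Getting this last collapse right—ensuring no double-counting and that the total stays $\leq r$ rather than $2r$—is the heart of the argument and requires using that the move is of the stated kind so that the neighborhood $N$ genuinely carries one of $\alpha$ or $\beta$ as a boundary projection.
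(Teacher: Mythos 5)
Your outline has the same skeleton as the paper's proof (work factor by factor in the product of Farey graphs, control each move locally via Lemmas \ref{prt} and \ref{ml}, then telescope), but it rests on one false accounting claim and leaves unresolved exactly the point where the paper does its real work. The false claim is that a single elementary move ``contributes to at most one factor.'' An elementary move of the second kind is realized by curves with $|\alpha\cap\beta|=2$, and nothing prevents the two intersection points from lying in two \emph{different} complexity $1$ subsurfaces $Y_i,Y_j\subset\Sigma\setminus Q$. In that case neither curve is contained in either subsurface, so no special couple is formed, the hypothesis of Proposition \ref{pro1} is satisfied, and yet the projections of the moving curves change in \emph{both} factors. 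With your per-edge, per-factor bound of $1$, such an edge costs $2$, and your argument then only yields $d_Q(\omega_0,\omega_r)\leq 2r$. These ``split'' moves are precisely the ones the paper isolates at the start of its proof (the subpaths in which every move is of the second kind with intersection points in different complexity $1$ subsurfaces), and handling them occupies most of the argument.

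Second, the difficulty you flag under ``Main obstacle'' is the substance of the proof, and neither of your suggested fixes closes it. It is true that a boundary curve $d\subset\partial N$ lies in $\nu_j\cap\nu_{j+1}$, so $\pi_Y(d)$ is a common element of both projection sets and a single edge can even be given cost $0$; the problem is that the witness chosen for edge $j$ need not survive to edge $j+1$: the curve carrying $d$ can itself be crossed or replaced by a later move, and compatibility of nearby choices can genuinely fail because in a $4$-holed sphere two \emph{disjoint} arcs can have projections at Farey distance $2$ (two disjoint seams with two common boundary components give $\iota(\pi_Y(s),\pi_Y(t))=4$ by Lemma \ref{int}), so ``collapsing across shared vertices'' is not automatic --- note the hypothesis of the Proposition only excludes special couples formed by the curves realizing the moves, not such configurations among the static curves. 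The paper resolves this with an amortized analysis: it selects one boundary arc $d_1$ whose projection serves as the projected vertex for up to five consecutive decompositions (equation (\ref{e1})), shows that when the selected arc must be replaced the resulting cost of $2$ is spread over the intervening steps, obtaining $d_Q(\pi_Q(\nu_1),\pi_Q(\nu_{l+2}))\leq l-3$ on such a subpath of length $l$, and then checks that isolated edges cost at most $1$ and junction vertices at most $2$, the surplus saved inside the subpaths absorbing the junction costs. Your proposal contains no substitute for this bookkeeping, so as written it establishes only a bound of order $2r$, not $r$.
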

\begin{proof} All arcs considered in this proof inside a complexity $1$ surface $Y$ are assumed to be the result of intersecting with $Y$ curves  that are tight in it. 

For each two adjacent complexity $1$ subsurfaces $Y_{i},Y_{j}\subset \Sigma\setminus Q$ there is at most one single common boundary component, due to the homology condition in the definition of any such $Y_{k}$ given in \ref{mc}. For such a common boundary we shall consider a regular neighborhood of it, that is, an annulus $A_{ij}$ in such way that its boundary circles become boundaries of $Y_{i}$ and $Y_{j}$. All along this proof the curves we are dealing with, are  tight in all $Y_{i}, i=1,\dots,n$, eventually deforming them isotopically to make some intersection points between two of them lying outside any $Y_{i}$. 

Consider first those subpaths $\boldsymbol{\nu}_{\mathbf{1}},\dots,\boldsymbol{\nu}_{\mathbf{s}}\subset\boldsymbol{\nu}$ such that all elementary moves inside them are of the second kind and each pair of curves realizing such move have their intersection points at different complexity $1$ subsurfaces. 

We shall deal first with projections of this class of subpaths to control their distance after projecting to $\mathcal{P}_{Q}$. 

Let $\boldsymbol{\nu}_{j}\subset \boldsymbol{\nu}$ one of the above subpaths and assume first that $\mathrm{length}(\boldsymbol{\nu}_{j})\geq 3$. Set $\nu_{1},\dots ,\nu_{k}$ the set of vertices of $\boldsymbol{\nu}_{j}$. The elementary move $\nu_{1}\to \nu_{2}$ is realized by a pair of curves, say $\alpha$ and $\beta$ with their intersection points contained in $Y_{1}\cup Y_{2}\subset \Sigma\setminus Q$. Let $N\subset Y_{1}$ be a regular neighborhood of the component of $(\alpha \cup \beta)\cap Y_{1}$ that contains the intersection point. Call $d_{1},\dots ,d_{4}$ the arcs from $\partial N \cap Y_{1}$. We stress that these arcs are the result of intersecting four different curves in $\nu_{1}\cap \nu_{2}$ with $Y_{1}$.

In case of $Y_{1}$ being a $1$-holed torus, Corollary \ref{3atm} ensures that two of the arcs in $\partial N \subset Y_{1}$ occupy one isotopy class while the other two other isotopy class, being these two classes different from those of $\alpha \cap Y_{1}$ and $\beta \cap Y_{1}$, so an eventual curve $\gamma \in \nu_{i+2}\subset \boldsymbol{\nu}_{j}$ intersecting a curve in $\nu_{i}$ inside $Y_{1}$ could only do it with $\alpha$ again as an intersection with an arc $d_{k}\in \partial N$ would imply the intersection with its isotopic fellow $d_{s}$ both coming from different curves in $\nu_{i+1}$. Due to this fact, further analysis on intersection of arcs take us to deal with the case of $Y_{i}$ being a $4$-holed sphere.

We may select one of $d_{1},\dots ,d_{4}$, say $d_{1}$ to construct $\pi_{Y_{1}}(d_{1})$ as that curve in $Y_{1}$ that belongs to the projection of $\nu_{1},\nu_{2},\nu_{3}$, and eventually of $\nu_{4}$ and $\nu_{5}$, depending on the length of $\boldsymbol{\nu}_{j}$. By proceeding analogously on all that subsurfaces $Y_{l}\subset \Sigma\setminus Q$ containing point of intersection between curves that are involved in an elementary move inside $\boldsymbol{\nu}_{j}$, we can ensure 
\begin{equation}\label{e1}
d_{Q}(\pi_{Q}(\nu_{1}),\pi_{Q}(\nu_{i}))=0,\;\; i\leq 5
\end{equation}

In the particlar case that $\mathrm{length}(\boldsymbol{\nu}_{j})= 2$ we may select curves as before to have $d_{Q}(\pi_{Q}(\nu_{1}),\pi_{Q}(\nu_{3}))=0$, even in this particular case where $\nu_{3}\not\subset \boldsymbol{\nu}_{j}$.

\begin{figure}[h]
\begin{center}
\scalebox{0.22}{\includegraphics{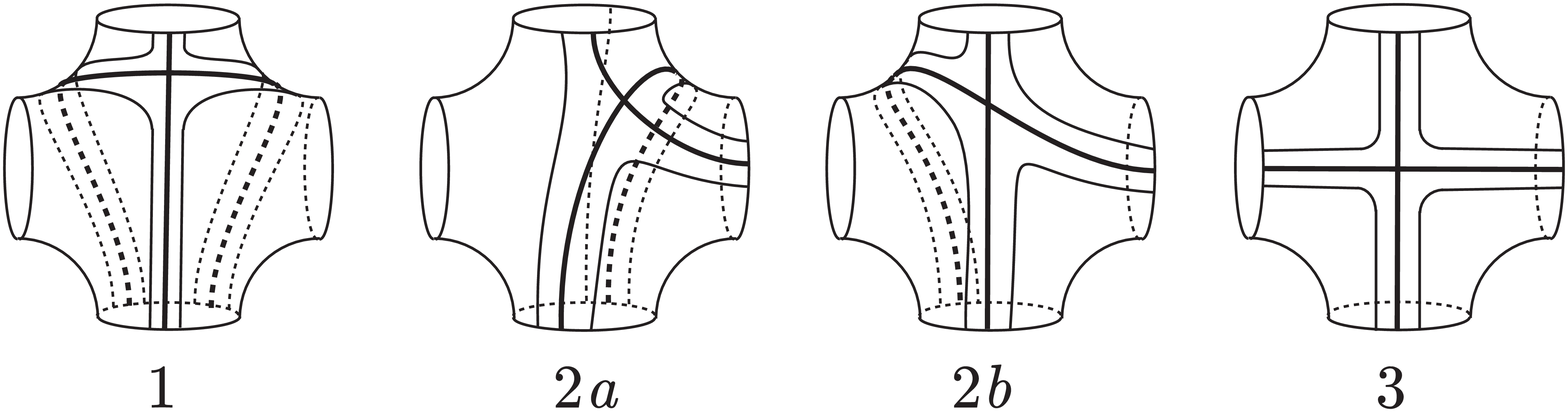}}\label{comf}
\end{center}
\caption{The figure illustrates the proof of proposition \ref{pro1}. The portions of $\alpha$ and $\beta$ contained in $Y$, in bold line in, intersect in one single point. Also the corresponding arcs in $\partial N$ are shown.}
\end{figure}

To follow the proof we observe now that there are four essentially different possibilities for two arcs contained in a $4$-holed sphere $Y$ (up to homeomorphism of $Y$) intersect in one point attending to the number of components of $\partial Y$ containing end points of the two arcs, and how many of them are waves or seams, these are drawn in figure $3$.

Next we are concerned with an eventual need of change of the selected arc $d_{1}\subset Y_{1}$ to project vertices $\nu_{i}$ in $\boldsymbol{\nu}_{j}, i\geq 5$. This situation occurs when $d_{1}$ is part of a curve involved the elementary move $\nu_{5}\to \nu_{6}$ with just one intersection point in $Y_{1}$. Call $f_{1}$ the arc that intersects $d_{1}$ in this elementary move and let $N\subset Y_{1}$ be a regular neighborhood of $d_{1}\cup f_{1}$. Then as shows figure $3$, there are at least two arcs $d'_{1},d'_{2}\subset \partial N$ such that $d_{Y_{1}}(\pi_{Y_{1}}(d_{1}),\pi_{Y_{1}}(d'_{i}))=1$. The least favorable case would be that in the same $\nu_{5}\to\nu_{6}$, there is an arc $f_{2}\subset Y_{2}$ for another $4$-holed sphere that intersects the selected arc $d_{2}\subset Y_{2}$ in one single point, that is $\pi_{Y_{2}}(d_{2})=\pi_{Y_{2}}(\nu_{5})$. We may assume that both $d_{1}$ and $d_{2}$ were used to construct $\pi_{Y_{1}}(\nu_{i})$ and $\pi_{Y_{2}}(\nu_{i})$ for $i=1,\dots ,5$. The existence of at least two such $d'_{1}$ and $d'_{2}$ leads us to ensure $d_{Q}(\pi_{Q}(\nu_{5}),\pi_{Q}(\nu_{7}))\leq 2$, also considering the case that $\nu_{7}\notin \boldsymbol{\nu}_{j}$. This sums to the bound in (\ref{e1}) to get $d_{Q}(\pi_{Q}(\nu_{1}),\pi_{Q}(\nu_{7}))\leq 2$, considering also the case of $\nu_{7}\not\in\boldsymbol{\nu}_{j}$. 

Continuing this process we get for a subpath $\boldsymbol{\nu}_{j}$ having length $l$
\begin{equation}
d_{Q}(\pi_{Q}(\nu_{1}),\pi_{Q}(\nu_{l+2}))\leq l-3,
\end{equation}
where $\nu_{l+2}\not\in \boldsymbol{\nu}_{j}$.

Now let $\nu_{0}\notin\boldsymbol{\nu}_{j}$ be a vertex that is adjacent to $\nu_{1}\in\boldsymbol{\nu}_{j}$. Call $\lambda,\mu\subset \Sigma$ the curves realizing $\nu_{0}\to\nu_{1}$. This elementary move may be either of the first or of the second kind, in any case their intersection points do not lie at different complexity $1$ subsurfaces on $\Sigma \setminus Q$. 

In order to attach the projections of all the paths $\boldsymbol{\nu}_{j}$ along those ones of vertices that do not belong to them, we deal firstly with the case that $\nu_{0}\to\nu_{1}$. Let $d_{0}\subset Y_{1}$ be the arc or curve considered to make $\pi_{Y_{1}}(\nu_{0})$. Let $d_{1}\subset Y_{1}$ be the arc whose projection is $\pi_{Y_{1}}(\nu_{1})$, as explained above. In this case $Y_{1}$ contains one point of $\alpha \cap \beta$, the possible need to change  $d_{0}$ could be due to two reasons: one is because this arc intersects $d_{1}$ and the other because it does not coincide with it. Notice $d_{0}$ can not intersect either $\alpha$ or $\beta$ unless it is contained in $N$ and for this reason it can not intersect $d_{1}$ as both $d_{0}$ and $d_{1}$ are assumed to be tight in $Y_{1}$ and $N\subset Y_{1}$ is homeomorphic to a disc. The remaining possibility is $d_{0} \subset Y_{1}\setminus N$. In case that $Y_{1}$ is a $1$-holed torus $d_{0}$ is an arc and by Lemma \ref{3atm} we have $d_{Y_{1}}(\pi_{Y_{1}}(d_{0}),\pi_{Y_{1}}(d_{i}))\leq 1$.  
In case that $Y_{1}$ is a $4$-holed sphere we may assume that $\pi_{Y_{1}}(d_{0})=\pi_{Y_{1}}(s)$ for a seam $s\subset Y_{1}\setminus N$, then by Lemma \ref{int} we have $d_{Y_{1}}(\pi_{Y_{1}}(d_{0}),\pi_{Y_{1}}(d_{i}))\leq 2$, and so we may ensure that $d_{Q}(\pi_{Q}(\nu_{0}),\pi_{Q}(\nu_{1}))\leq 2$.
 
Finally let $\nu_{r}\to \nu_{r+1}$ be any edge that does not lie in $ \boldsymbol{\nu}_{k}, k=1,\dots ,s$ and does not coincide with any of those used in the construction above.  Let $\xi, \zeta$ be the curves that realize this move. These curves intersect in a single subsurface $Y_{0}\subset \Sigma \setminus Q$. Let $d_{r}\subset Y_{0}$ be the arc or curve selected to make $\pi_{Y_{0}}(\nu_{r})$, then if $\zeta$ does not intersect $d_{r}$ we still consider this projection as contained in $\pi_{Y_{0}}(\nu_{r+1})$, otherwise consider the regular neighborhood $N$ of $d_{r}\cap\zeta\cap Y_{0}$ and by Lemmas \ref{prt} and \ref{ml} we have $d_{Y_{0}}(\pi_{Y_{0}}(d),\pi_{Y_{0}}(d_{r}))\leq 1$ for some arc $d\in \partial N$ then we have $d_{Q}(\pi_{Q}(\nu_{r}),\pi_{Q}(\nu_{r+1}))\leq 1$.

Assembling the bounds for the subpaths $\boldsymbol{\nu}_{j}$ together with this last bound for single edges not inside them gives the result.
\end{proof}

\begin{lem}[Orthogonality] \label{l2}Let $Q$ be a $n$-handle multicurve and let $\nu_{0}\in \mathcal{P}_{Q}$ and $\nu_{1}\notin \mathcal{P}_{Q}$ be two adjacent vertices in $\mathcal{P}$. Then $\pi_{Q}(\nu_{1})=\nu_{0}$.
\end{lem}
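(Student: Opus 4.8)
The plan is to reduce the statement to a factor-by-factor computation on the Farey graphs $\mathcal{P}_{Y_i}$ associated to the complexity-$1$ pieces $Y_1,\dots,Y_n$ of $\Sigma\setminus Q$. Since $\nu_0$ and $\nu_1$ are adjacent they are related by an elementary move replacing a curve $a\in\nu_0$ by a curve $b$, so $\nu_1=(\nu_0\setminus a)\cup b$ and $\nu_0\cap\nu_1=\nu_0\setminus a$. Because $Q\subseteq\nu_0$ while $Q\not\subseteq\nu_1$ (as $\nu_1\notin\mathcal{P}_Q$), the removed curve must satisfy $a\in Q$. First I would write $\nu_0=Q\cup\{c_1,\dots,c_n\}$, where $c_i$ is the unique curve of $\nu_0$ lying in $Y_i$. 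By the definition of $\pi_Q$ it then suffices to prove that $\pi_{Y_i}(\nu_1)=\{c_i\}$ for every $i$, since this forces $\pi_Q(\nu_1)$ to be the single pants decomposition $Q\cup\{c_1,\dots,c_n\}=\nu_0$.

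Next I would locate $b$. It lies in the subsurface $S:=N(a\cup b)$, a one-holed torus or four-holed sphere which, by the observation preceding Proposition \ref{pro1}, has $\partial S\subset\nu_0\cap\nu_1\subseteq\nu_0$; concretely $S$ is the component of $\Sigma\setminus(\nu_0\setminus a)$ containing $a$, i.e.\ the union of the pants of $\nu_0$ adjacent to $a$. Fixing $i$, since $\nu_0\cap Y_i=c_i$ and $a,\,Q\setminus a$ avoid the interior of $Y_i$, we get $\nu_1\cap Y_i=c_i\cup(b\cap Y_i)$, so everything reduces to understanding $b\cap Y_i$. Both $\partial S$ and $\partial Y_i\subset Q$ are submulticurves of the pants decomposition $\nu_0$, hence disjoint; moreover the interior of $S$ meets no curve of $\nu_0\setminus a$, so $b$, lying in the interior of $S$, is disjoint from every boundary component of $Y_i$ except possibly $a$ itself.

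This splits into two cases. If $a\notin\partial Y_i$, then $S$ and $Y_i$ share no pair of pants (adjacency of a pants to $a$ would force $a\in\{c_i\}\cup\partial Y_i$), so $S\cap Y_i=\varnothing$, hence $b\cap Y_i=\varnothing$ and $\pi_{Y_i}(\nu_1)=\pi_{Y_i}(c_i)=\{c_i\}$. If instead $a\in\partial Y_i$, then $a$ is a separating (null-homologous) boundary curve, so the move is of the second kind and $b$ meets $a$ in exactly two points; being disjoint from the remaining boundary curves of $Y_i$, the set $b\cap Y_i$ is a single essential arc $e$ with both endpoints on $a$, and $e$ is disjoint from $c_i$ because $c_i\in\partial S$.

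The crux is then the local computation that $\pi_{Y_i}(e)=c_i$. Cutting $Y_i$ along $c_i$, the arc $e$ lies in the pair of pants adjacent to $a$ and joins $a$ to itself while missing $c_i$; in a pair of pants such an essential arc is unique up to isotopy and separates the two remaining boundary components, and tracing the unique curve of $Y_i$ disjoint from $e$ (equivalently, $\partial$ of a regular neighborhood of $a\cup e$) recovers exactly the splitting realized by $c_i$. Hence $\pi_{Y_i}(\nu_1)=\pi_{Y_i}(c_i\cup e)=\{c_i\}$. I expect this last case, where $a$ genuinely bounds $Y_i$ and $b$ truly enters $Y_i$, to be the main obstacle: one fears that the new curve $b$ perturbs the projection, and the point is precisely that an essential $a$-to-$a$ arc disjoint from $c_i$ can only project back to $c_i$. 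Collecting the two cases over all $i$ yields $\pi_Q(\nu_1)=\nu_0$.
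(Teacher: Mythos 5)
Your proposal is correct and follows essentially the same route as the paper's proof: both reduce to computing $\pi_{Y_i}$ factor by factor, observe that the replaced curve lies in $Q$ (hence is a boundary component of any $Y_i$ that the new curve $b$ enters), and conclude that $b\cap Y_i$ is a single arc disjoint from the curve $c_i\in\nu_0\cap\nu_1$ inside $Y_i$, so it projects back to $c_i$. Your version merely fills in details the paper leaves implicit (the case split on $a\in\partial Y_i$, and the separating/second-kind observation).
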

\begin{proof} Assume the elementary move $\nu_{0}\to \nu_{1}$ is realized by the curves $\alpha \in \nu_{0}$ and $\beta \in\nu_{1}$.

It may happen that $\beta$ does not intersect any complexity $1$ subsurface $Y_{i} \in \Sigma \setminus Q$, but in this case clearly we have that $\pi_{Q}(\nu_{1})=\nu_{0}$ as we do not have to consider projections of curves over the $Y_{i}$. The other possibility is that $\beta$ intersects some $Y_{i}$ but it can not be contained in any one of them for in that case $\nu_{1}\in \mathcal{P}_{Q}$.

That said, computing $\pi_{Q}(\nu_{1})$ relies on computing $\pi_{Y_{i}}(\beta \cap Y_{i})$ for each $Y_{i}\subset \Sigma \setminus Q$ that $\beta$ intersects. The number of such subsurfaces is at most $2$ in case of $\alpha$ separates two complexity $1$ subsurfaces in $\Sigma \setminus Q$.

To finish the proof we assume that $\beta$ intersects $Y_{1} \subset \Sigma \setminus Q$. As $\kappa (Y_{1})=1$, there is a curve $\gamma \subset Y_{1}$ that belongs to $\nu_{0}\cap \nu_{1}$ and on the other hand $\beta \cap Y_{1}$ consists of one single arc, say $\beta_{1}$ as necessarily the curve $\alpha$ is one boundary component of $Y_{1}$, and $\pi_{Y_{i}}(\beta_{1})=\gamma$. Making the same reasoning for another eventual $Y_{2}$ proves the assertion.
\end{proof}

\mathstrut

\begin{lem}\label{LF} Let $\boldsymbol{\nu}=(\nu_{0},\dots ,\nu_{r})$ be a geodesic path in the pants graph $\mathcal{P}$. Let $\alpha$ and $\beta$ be a pair of curves realizing the elementary move $\nu_{k} \to \nu_{k+1}$ such that for some subsurface $Y\subset \Sigma \setminus Q$ their intersections with $Y$ form a special couple. Then there is at most another curve $\gamma \in  \nu_{k-1}\cup \nu_{k+2} $ such that $\gamma \cap Y$  forms a different special couple from the former, with just one of  $\alpha \cap Y$ or $\beta \cap Y$. 
\end{lem}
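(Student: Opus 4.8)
The plan is to first fix the geometry of the given special couple, then to use disjointness within each pants decomposition to reduce the possible new couples to two explicit candidates, and finally to invoke the geodesic hypothesis to exclude both candidates occurring at once. Reversing the path $\boldsymbol{\nu}$ if necessary, I assume $\alpha\subset Y$ is the curve $c$ of the couple and $\beta\cap Y=s$ is the seam, so that $\iota(\alpha,\beta)=2$ and, by Lemma~\ref{int}, $d_{Y}(\alpha,\pi_{Y}(\beta))=2$; note that the presence of a seam forces $Y$ to be a four-holed sphere. The two ways a curve $\gamma$ can enter a special couple are rigid: pairing with $\alpha$ forces $\gamma\cap Y$ to be a seam meeting $\alpha$ in two points, while pairing with $\beta$ forces $\gamma\cap Y$ to be a curve of $Y$ meeting $s$ in two points, and in either case $\gamma$ must cross the relevant curve of the former couple. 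I write the move $\nu_{k-1}\to\nu_{k}$ as $p\mapsto q$ with $p\in\nu_{k-1}$, $q\in\nu_{k}$, and the move $\nu_{k+1}\to\nu_{k+2}$ as $u\mapsto v$ with $u\in\nu_{k+1}$, $v\in\nu_{k+2}$.

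Next I localise the candidates, using that curves of one pants decomposition are pairwise disjoint and that two disjoint essential curves of the four-holed sphere $Y$ are isotopic. Any $\gamma\in\nu_{k-1}$ other than $p$ already lies in $\nu_{k}$ and hence is disjoint from $\alpha$; such a $\gamma$ cannot cross $\alpha$, so forms no couple with $\alpha$, and if $\gamma\subset Y$ it is isotopic to $\alpha$ and only reproduces the former couple. Thus a new couple from the left can be produced only by $\gamma=p$, and only when $\alpha=q$ (otherwise $\alpha\in\nu_{k-1}$ would be disjoint from $p$). Symmetrically, since $\nu_{k+2}$ agrees with $\nu_{k+1}$ off $u\mapsto v$ and the unique curve of $\nu_{k+1}$ meeting $\alpha$ is $\beta$, a new couple from the right can be produced only by $\gamma=v$ (a couple with $\beta$ moreover forcing $u=\beta$). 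Consequently at most one new couple arises from $\nu_{k-1}$, via $p$, and at most one from $\nu_{k+2}$, via $v$.

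It remains to show that $p$ and $v$ cannot both form a special couple, and this is where the geodesic hypothesis is essential and where the main difficulty lies. Assuming both, the active curve runs through $p,\alpha,\beta,v$: when $u=\beta$ the four decompositions $\nu_{k-1},\dots,\nu_{k+2}$ agree off a single slot lying in one complexity~$1$ complementary subsurface $W$, so that $d(\nu_{k-1},\nu_{k+2})=d_{\mathcal{P}_{W}}(p,v)$ and $p,\alpha,\beta,v$ is a path of length three in the Farey graph $\mathcal{P}_{W}$; the case $u\neq\beta$ is treated analogously on the two affected slots. Feeding the special-couple relations through $\pi_{Y}$ by means of Lemma~\ref{int}, the images of $p,\alpha,\beta,v$ land in a constrained configuration of $\mathcal{P}_{Y}$ exhibiting several distance-$2$ relations among non-adjacent members; I would use these to force a shortcut $d_{\mathcal{P}_{W}}(p,v)\leq 2$, whence $d(\nu_{k-1},\nu_{k+2})\leq 2$, contradicting that $\boldsymbol{\nu}$ is geodesic of length three on this segment. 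The principal obstacle is exactly producing this shortcut: the projection subsurface $Y$ and the move subsurface $W$ overlap without being nested (indeed $W\neq Y$ whenever the offending curve is a seam), so one cannot invoke a clean subsurface-projection inequality and must instead carry out a direct Farey-graph estimate of $\iota(p,v)$, with separate bookkeeping for the seam versus curve cases and for the two-slot case $u\neq\beta$.
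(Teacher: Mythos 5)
Your first two paragraphs are sound, and in fact more systematic than the paper's own case analysis: using that the curves of a single pants decomposition are disjoint, that a special couple requires an actual crossing, and that two disjoint essential curves in a four-holed sphere are isotopic, you correctly reduce the possible new couples to at most one candidate per side, namely $p=\nu_{k-1}\setminus\nu_{k}$ (and only when $q=\alpha$) and $v=\nu_{k+2}\setminus\nu_{k+1}$. But this only yields ``at most two'', while the lemma asserts ``at most one'' --- and that is precisely the bound Theorem~\ref{TP} consumes when it caps the subpaths $\boldsymbol{\nu}^{*}_{Y}$ at length $2$. The whole content of the lemma, and the only point where the geodesic hypothesis enters, is the exclusion of the simultaneous case, and that is exactly the step you do not prove: you describe a constrained configuration, say you ``would use'' it to force $d_{\mathcal{P}_{W}}(p,v)\leq 2$, and then name the production of that shortcut as ``the principal obstacle''. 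An acknowledged obstacle at the crux is a genuine gap, and the obstacle is real for the reason you yourself identify: the special-couple hypotheses constrain the traces $p\cap Y$ and $v\cap Y$, whereas the estimate you need lives in the Farey graph of the move subsurface $W$, and $W\neq Y$ (the curve carrying the seam leaves $Y$), so no clean projection inequality transports one into the other. A smaller point: only the inequality $d(\nu_{k-1},\nu_{k+2})\leq d_{\mathcal{P}_{W}}(p,v)$ is available to you without circularity --- the equality you assert is itself a convexity statement of the same nature as the theorem the paper is proving --- though the inequality is all your intended contradiction would need.

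The paper never passes through a length-three shortcut; its argument is asymmetric. In its labelling (the seam-carrying curve $\alpha$ lies in $\nu_{k}$, the curve $\beta\subset Y$ lies in $\nu_{k+1}$), it argues via Lemma~\ref{int} that a curve of $\nu_{k-1}$ whose trace forms a special couple with the seam $s=\alpha\cap Y$ is forced to be $\beta$ itself; then the move $\nu_{k-1}\to\nu_{k}$ must be $\beta\mapsto\alpha$, so $\nu_{k-1}=\nu_{k+1}$ and the path backtracks, contradicting geodesicity already on a length-two subpath. On the other side it classifies the curves of $\nu_{k+2}$ that can pair with $\beta$ as powers of half-twists, along $\beta$, of $\alpha\cap Y$, of which at most one can be the new curve. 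Thus one side contributes nothing and the other at most one curve. If you want to complete your proof, the missing ingredient is this rigidity statement --- a candidate pairing with the old seam must coincide with the $Y$-interior curve of the original couple --- which kills one of your two candidates outright by a one-step backtrack and makes your unproven two-step Farey estimate in $W$ unnecessary; your reduction to $p$ and $v$ would then combine with it directly.
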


\begin{proof} We set $s$ be the seam resulting from the intersection $\alpha \cap Y$ an we notice that there is no restriction on assuming that $\alpha \in \nu_{k}$ as the path may be reversed. 

The curve $\beta \in \nu_{k+1}$ is contained in the $4$-holed sphere $Y$ and it induces a pants decomposition on $Y$, then $Y\setminus \beta$ is a couple of $3$-holed spheres, say $P_{1}$ and $P_{2}$. Now the seam $s$ intersects $\beta$ in two points and then it has its end points at, say $P_{1}$. 
 
Any curve $\gamma \in \nu_{k-1}$ such that $\gamma \cap Y$ forms a special couple with $\alpha \cap Y$ has to be contained in $Y$ and $|\gamma \cap s|=2$. We may consider $\gamma=\pi_{Y}(u)$, for a seam $u$ and by Lemma \ref{int} there are two possibilities for $u$, namely those giving $\pi_{Y}(u)=\beta$. The choice for $u$ is unique and so is its projection $\gamma$ that it would be equal $\beta$, thus in contradiction with $(\nu_{0},\dots ,\nu_{r})$ being a geodesic path.

On the other side, for a curve $\gamma \in \nu_{k+2}$ form a special couple with $\beta$ let $s'\subset \gamma \cap Y$ the corresponding seam, $s'$ have its end points either at $\partial P_{1}$ to avoid intersections with $\partial N(\alpha \cup \beta)\cap Y$. Then the possibilities for $s'$ are all powers of half-twists along $\beta$, of $\alpha \cap Y$. These are the possibilities for $\gamma \cap Y$ form a special couple with $\beta$, and as before there is only one $\delta \in \nu_{k+3}$ that would form a special couple with $\gamma$ in $Y$, namely the same $\beta$ and in that case we get again the same contradiction.
\end{proof}

\mathstrut

\begin{lem}\label{sc} Let $\boldsymbol{\nu}=(\nu_{0},\dots ,\nu_{r})$ be a geodesic path in the pants graph $\mathcal{P}$. Let $\nu_{k}\to \nu_{k+1}$ be an elementary move such that the curves $\alpha$ and $\beta$ realizing this elementary move are special for a complexity $1$ subsurface $Y\subset \Sigma$. Assume that $\beta\subset Y$, then $\beta\in \pi_{Y}(\alpha)$.   
\end{lem}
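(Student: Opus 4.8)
The plan is to locate, among the connected components of $\alpha\cap Y$, an arc that is disjoint from $\beta$ and joins the two boundary components of one of the pairs of pants into which $\beta$ cuts $Y$; such an arc projects in $Y$ to exactly $\beta$, which is what the statement asks for. First I would fix notation: since $\beta\subset Y$ is essential and non-peripheral it divides $Y$ into two pairs of pants $P_1$ and $P_2$, and the special couple $(\alpha\cup\beta)\cap Y$ consists of $\beta$ together with the seam $s=\alpha\cap Y$ carrying the two intersection points, with $\iota(s,\beta)=2$. As the move is realized by two intersection points it is of the second kind, so all of $\iota(\alpha,\beta)=2$ is absorbed by $s$, and hence every \emph{other} component of $\alpha\cap Y$ is disjoint from $\beta$. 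I would emphasize at the outset that the seam $s$ itself cannot witness the claim: by Lemma \ref{int} one has $\iota(\pi_Y(s),\beta)=2\iota(s,\beta)=4$, that is $d_Y(\pi_Y(s),\beta)=2$, so $\beta\ne\pi_Y(s)$ and the arc I am after must be a different component of $\alpha\cap Y$.

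Next I would invoke the geometry of the move. Writing $N$ for a regular neighbourhood of $\alpha\cup\beta$, the second-kind move makes $N$ a four-holed sphere whose boundary curves lie in $\nu_k\cap\nu_{k+1}$; since $\alpha\not\subset Y$ while $\beta\subset Y$, this four-holed sphere $N$ is not the surface $Y$, yet both contain $\beta$. I would cut $N$ and $Y$ along $\beta$, compare the two pairs of pants of $N\setminus\beta$ with $P_1,P_2$ on the two sides of $\beta$, and then follow the two arcs into which $\beta$ divides $\alpha$. The aim is to read off how $\alpha$ leaves and re-enters $Y$ across $\partial Y$: tracking these crossings should exhibit a component of $\alpha\cap Y$ that lies entirely in one $P_i$, runs between its two boundary curves, and avoids $\beta$.

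The hard part is precisely this re-entry analysis, namely excluding the degenerate possibility that $\alpha\cap Y$ is the single seam $s$. I expect the geodesic hypothesis on $\boldsymbol\nu$ to be essential here: were $s$ the only component of $\alpha\cap Y$, then $\pi_Y(\alpha)=\{\pi_Y(s)\}$ would sit at distance $2$ from $\beta$ in $\mathcal{P}_Y$, and, combined with the control on adjacent special couples furnished by Lemma \ref{LF}, this would permit a strictly shorter projected path around the edge $\nu_k\to\nu_{k+1}$, contradicting that $\boldsymbol\nu$ realizes distance. This forces the existence of an arc $a\subset\alpha\cap Y$ with $a\cap\beta=\varnothing$ joining the two boundary components of a single $P_i$. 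For such an arc the only curve of $Y$ disjoint from it is $\beta$, so $\pi_Y(a)=\beta$ and therefore $\beta\in\pi_Y(\alpha)$, as claimed.
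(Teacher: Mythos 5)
Your reduction is the right one, and your endgame coincides with the paper's: since both points of $\alpha\cap\beta$ lie on the seam $s$, every other component of $\alpha\cap Y$ is disjoint from $\beta$, and any seam joining the two components of $\partial Y$ lying on one side of $\beta$, and disjoint from $\beta$, projects precisely to $\beta$. The gap is exactly where you say the hard part is: the existence of that extra arc. Your proposed resolution --- that geodesicity of $\boldsymbol{\nu}$ together with Lemma \ref{LF} rules out $\alpha\cap Y$ being the single seam $s$ --- does not work. A short path between projections in $\mathcal{P}_{Q}$ contradicts nothing: the endpoints of $\boldsymbol{\nu}$ are not assumed to lie in $\mathcal{P}_{Q}$, and the passage from geodesicity in $\mathcal{P}$ to control of projected distances is precisely what Theorem \ref{TP} (whose proof uses this very lemma) is trying to establish, so the appeal is circular. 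Worse, the geodesic hypothesis has no force here at all: a single edge $(\nu_{k},\nu_{k+1})$ is already a geodesic path, so any proof of the lemma must go through in that case, where geodesicity gives you nothing.

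The ingredient you never use, and which is the entire content of the paper's proof, is the homology condition in the definition of an $n$-handle multicurve (subsection \ref{mc}): each boundary component of a complexity $1$ piece $Y\subset\Sigma\setminus Q$ is trivial in $H_{1}(\Sigma,\partial\Sigma;\mathbb{Z})$, hence separating in $\Sigma$; the lemma's $Y$ is such a piece, which is how it is applied in Theorem \ref{TP}. A separating curve meets the simple closed curve $\alpha$ in an even number of points, and this settles the existence question by pure parity, with no reference to geodesics: the seam $s$ contributes exactly one endpoint to each of the two components $c_{1},c_{2}$ of $\partial Y$ that it joins, while waves contribute two endpoints to a single component, so parity forces at least one further seam of $\alpha\cap Y$ with an endpoint on $c_{1}$. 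That further seam is disjoint from $\beta$, hence cannot cross from the $\{c_{1},c_{2}\}$ side of $\beta$ to the other side (both endpoints of $s$ lie on the same side because $\iota(s,\beta)=2$), so it is a seam $s'$ from $c_{1}$ to $c_{2}$ with $s'\cap\beta=\varnothing$, and then $\pi_{Y}(s')=\beta$. Without the separating hypothesis the statement is simply false: one can embed $Y$ with non-separating boundary components and close $s$ up through the complement of $Y$ without re-entering $Y$, so that $\alpha\cap Y=s$, $\pi_{Y}(\alpha)=\{\pi_{Y}(s)\}$, and $\beta\notin\pi_{Y}(\alpha)$, all within a one-edge (hence geodesic) path. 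So no argument that omits this condition, yours included, can be completed.
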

\begin{proof} Notice first that the homology condition for the boundary curves of any complexity $1$ subsurface $Y\subset \Sigma\setminus Q$ implies that these curves are separating in $\Sigma$. Due to this fact, given a special couple $s\cup \beta \subset \Sigma$ where $s\subset \alpha \cap Y$ is a seam, there is another seam $s'\subset \alpha \cap Y$ that has its end points at the same components as $s$ and avoiding any intersection with $\beta$. It results that $\pi_{Y}(s')=\beta$ and $\beta\in \pi_{Y}(\alpha)\cap \pi_{Y}(\beta)$.  
\end{proof}

\mathstrut

\begin{thm}\label{TP} Let $\boldsymbol{\nu}=(\nu_{0},\dots, \nu_{r})$ be a geodesic path in the pants graph $\mathcal{P}$, then there exist $\omega_{0}\in\pi_{Q}(\nu_{0})$ and $\omega_{r}\in\pi_{Q}(\nu_{r})$ such that 
\[
d_{Q}(\omega_{0},\omega_{r})\leq r.
\]
\end{thm}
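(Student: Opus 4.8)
The plan is to reduce the statement to Proposition \ref{pro1}, which already gives the bound $d_{Q}(\omega_{0},\omega_{r})\le r$ for paths containing no special couple, and to pay for the remaining moves --- those realized by a special couple --- using the geodesic hypothesis through Lemmas \ref{LF} and \ref{sc}. First I would mark along $\boldsymbol{\nu}$ the elementary moves $\nu_{k}\to\nu_{k+1}$ whose realizing curves $\alpha,\beta$ form a special couple $(\alpha\cup\beta)\cap Y$ in some complexity $1$ subsurface $Y\subset\Sigma\setminus Q$; call these the \emph{special moves}. Since a special couple carries both intersection points of $\alpha\cap\beta$ inside the single surface $Y$, every other move is free of special couples, and the maximal subpaths of $\boldsymbol{\nu}$ lying strictly between consecutive special moves satisfy the hypothesis of Proposition \ref{pro1}. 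Thus a subpath of length $l$ contributes at most $l$ to the distance between suitably chosen projections of its endpoints.

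The decisive point is that each special move can be absorbed at zero cost in its own factor. Naively a special move is expensive: if $s=\alpha\cap Y$ is the seam and $\beta=c\subset Y$ the curve, then by Lemma \ref{int} one has $\iota(\pi_{Y}(s),\beta)=2\iota(s,c)=4$, so $\pi_{Y}(s)$ and $\beta$ sit at distance $2$ in the Farey factor $\mathcal{P}_{Y}$, which alone would already threaten the bound. The remedy is Lemma \ref{sc}: because $\beta\subset Y$ forces $\beta\in\pi_{Y}(\alpha)$, I may select the $Y$-coordinate of the projection of $\nu_{k}$ to be $\beta$ itself rather than $\pi_{Y}(s)$, and this coincides with the $Y$-coordinate of $\nu_{k+1}$. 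Hence, with the right choice of representative in $\pi_{Q}(\nu_{k})$, the special move produces no displacement in $\mathcal{P}_{Y}$; and since both points of $\alpha\cap\beta$ lie in $Y$, the remark preceding Proposition \ref{pro1} shows it increases no other factor either.

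The hard part will be guaranteeing that these locally optimal choices are mutually compatible, so that the projected paths built over the individual segments and special moves glue into a single edge path in $\mathcal{P}_{Q}$ of the required length. This is precisely what Lemma \ref{LF} is for: along a geodesic the special couples for a fixed $Y$ cannot accumulate, since at most one further curve in $\nu_{k-1}\cup\nu_{k+2}$ can form a (necessarily different) special couple with just one of $\alpha\cap Y$, $\beta\cap Y$, and any attempt to flank a vertex by two special couples projecting to the same curve would contradict minimality of $\boldsymbol{\nu}$. Consequently the $Y$-coordinate forced at a special move does not clash with the arc selected for the neighbouring segment, and the choices propagate consistently along the whole geodesic. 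Where a segment meets $\mathcal{P}_{Q}$, the Orthogonality Lemma \ref{l2} pins the projection down uniquely and removes any ambiguity at that junction.

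Finally I would assemble the estimate. Writing $\boldsymbol{\nu}$ as the concatenation of its special-couple-free subpaths, of total length $r-m$ where $m$ is the number of special moves, together with the $m$ special moves, and summing the Proposition \ref{pro1} bound over the former with the zero contribution of the latter, yields
\[
d_{Q}(\omega_{0},\omega_{r})\le (r-m)+0\le r,
\]
with $\omega_{0}\in\pi_{Q}(\nu_{0})$ and $\omega_{r}\in\pi_{Q}(\nu_{r})$ the endpoints of the glued projected path, as required.
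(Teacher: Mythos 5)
Your overall skeleton --- split $\boldsymbol{\nu}$ into special-couple-free segments handled by Proposition \ref{pro1} plus the special moves, and use Lemma \ref{sc} to make each special move cost nothing in its Farey factor --- is the same as the paper's, but there is a genuine gap at the junctions, and it is exactly the point to which the paper devotes most of its proof. Proposition \ref{pro1} is an existence statement: it produces \emph{some} $\omega_{k}\in\pi_{Q}(\nu_{k})$ at the end of a segment, and it gives you no control over which element of the (multivalued) projection set is produced. The special move $\nu_{k}\to\nu_{k+1}$ forces the $Y$-coordinate of the projection of $\nu_{k}$ to be $\beta$ itself, but the element delivered by the preceding segment's construction may instead have $Y$-coordinate $\pi_{Y}(s)$, where $s=\alpha\cap Y$ is the seam; by Lemma \ref{int}, $\iota(\pi_{Y}(s),\beta)=2\iota(s,\beta)=4$, so these two choices sit at distance $2$ in the factor $\mathcal{P}_{Y}$. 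Your sum $(r-m)+0$ silently omits this switching cost $d_{Q}(\omega_{k},\omega_{k}')$ at every junction, and the triangle inequality then only gives a bound that can exceed $r$. Lemma \ref{LF} cannot repair this: it says special couples for a fixed $Y$ do not accumulate along a geodesic, which is a statement about the curves in the path, not about compatibility between the arc selected inside the proof of Proposition \ref{pro1} and the curve $\beta$ forced by Lemma \ref{sc}.

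The paper closes this gap by opening up the construction of Proposition \ref{pro1} rather than using it as a black box. The regular neighborhood $N$ of $(\alpha\cup\beta)\cap Y$ has two boundary seams $d_{3},d_{4}$ isotopic to each other (and to a third arc $d_{5}\subset\alpha$), all projecting to $\beta$, and these three arcs lie on three \emph{distinct} curves of $\nu_{k}$; since an elementary move changes only one curve at a time, one of them survives back to $\nu_{k-2}$, so $\beta$ is an admissible $Y$-coordinate for $\pi_{Q}(\nu_{k-2})$ as well, pushing the possible discrepancy back to $\nu_{k-3}$. A case analysis there (via Lemma \ref{ml} and the figure of the four configurations) shows the residual mismatch is at most $1$, which is absorbed by the four edges from $\nu_{k-3}$ to $\nu_{k+1}$, giving $d(\pi_{Q}(\nu_{k-3}),\pi_{Q}(\nu_{k+1}))\leq 4$ and hence the amortized bound of $1$ per edge. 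Without an argument of this kind --- re-selecting the projections over several vertices preceding the special move and bounding the leftover discrepancy --- your gluing step does not go through.
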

\begin{proof} Call $\boldsymbol{\nu}^{*}\subset \boldsymbol{\nu}$ a subpath such that for any edge $\nu_{k}\to\nu_{k+1}$ contained on it, the curves $\alpha$ and $\beta$ that realize this elementary move are special for some subsurface $Y\subset \Sigma \setminus Q$. As in the proof of Proposition \ref{pro1} we are concerned with those $Y\subset \Sigma \setminus Q$ in which there are intersection points of curves realizing the elementary moves. In the case of $\boldsymbol{\nu}^{*}$ may be more than one such subsurface where a special pair takes place and for each $Y$ the length of the subpath $\boldsymbol{\nu}^{*}_{Y}\subset \boldsymbol{\nu}^{*}$ is at most $2$ by Lemma \ref{LF}. Consider one such $Y\subset \Sigma \setminus Q$ and let $\alpha, \beta$ and eventually $\gamma$ be the curves involved in such subpath. We may assume that $\alpha \cap Y$ is a seam an so $\beta \subset Y$, then let $N$ be a regular neighborhood of $(\alpha \cup \beta)\cap Y$, that in case of length $2$ it also contains $\gamma$. Exactly two components of $\partial N$, $d_{1}$ and $d_{2}$ are isotopic to other two components $c_{1},c_{2}\subset\partial Y$ and the other two $d_{3}$ and $d_{4}$ are seams isotopic between them and joining the other two components $c_{3},c_{4}\subset \partial Y$. As in Lemma \ref{sc} we may select one, $d_{3}$, and consider $\pi_{Y}(d_{3})$ as the corresponding curve for the projection of $\boldsymbol{\nu}^{*}_{Y}$.

When $\boldsymbol{\nu}^{*}_{Y}$ has length $1$ we may assume $\alpha \cap Y$ being a seam. Consider all possible seams from any of $c_{1}, c_{2}$ to one of $c_{3}, c_{4}$ such that intersect $\alpha \cap Y$ at most once. Now for one such seam $s$ intersecting $\alpha \cap Y$ consider the wave $w$ that has $s$ as an associated seam and avoids any intersection with $c_{1}$ or $c_{2}$. The set formed by the waves thus obtained together with those seams that do not intersect $\alpha$, $\alpha \cap Y$ and $d_{3}$ are all possibilities for arcs to be projected as $\pi_{Y}(\nu_{k-1})\subset \pi_{Q}(\nu_{k-1})$. Apart from $d_{Y}(\pi_{Y}(\alpha \cap Y),\pi_{Y}(d_{3}))=2$, for all other pair of arcs we have that this distance is $1$.

If we impose the arc $d_{3}$ as to be projected from the vertices of $\boldsymbol{\nu}^{*}_{Y}$ we may find a discrepancy between the one coming from the previous vertices as constructed in Proposition \ref{pro1}. We now measure this discrepancy in terms of distance between the projections. Notice that together with $d_{3}$ and $d_{4}$ there is another isotopic arc to them, say $d_{5}\subset \alpha \subset \nu_{k}$. 

As all the arcs $d_{3},d_{4}$ and $d_{5}$ correspond to different curves, we may assume that al least one of them, say $d_{3}$ still remains at the position $\nu_{k-2}$ but perhaps not in $\nu_{k-3}$ in this least favorable case, as a result of the construction in Proposition \ref{pro1}, either another arc or curve in the position of $\nu_{k-3}$ intersects $d_{3}$ or it is in the boundary of a region $R$ as that in figure $3$ or it lies outside $R$. In the first case the arc $d$ provided by Lemma \ref{ml} in the referred construction verifies $d(\pi_{Y}(d),\pi_{Y}(d_{3}))\leq1$. In case that $d_{3}$ lies outside $R$ is checked by inspection that we have the same bound between the projections of $d_{3}$ and any arc of $\partial R$. Now in case that $d_{3}$ is one of the arcs in $\partial R$ we may take it as the selected arc in that construction. The same reasoning holds for $\boldsymbol{\nu}^{*}_{Y}$ having length $2$.

Projecting $d$ if needed, from the position $\nu_{k-3}$, and $d_{3}$ from the rest, we obtain $d(\pi_{Y}(\nu_{k-3}), \pi_{Y}(\nu_{k+1}))\leq 1$ which is to say that we get that the subpath $\boldsymbol{\nu}^{*}_{Y}$ only increases distance by $1$. As this attaching is done by considerations on the subsurface $Y$, for any other $\boldsymbol{\nu}_{Y'}^{*}\subset \boldsymbol{\nu}^{*}$ that overlaps the attaching construction just made from $\nu_{k-3}$ to $\nu_{k+1}$ or $\nu_{k+2}$ allows us to keep the distance bound for the whole projections, thus $d(\pi_{Q}(\nu_{k-3}),\pi_{Q}(\nu_{k+1}))\leq 4$ or $d(\pi_{Q}(\nu_{k-3}),\pi_{Q}(\nu_{k+2}))\leq 5$ depending on the length of $\boldsymbol{\nu}^{*}_{Y}$. 

Finally, we apply the result of \ref{pro1} to those subpaths of $\boldsymbol{\nu}$ in the hypothesis of that Lemma, eventually with the correction to attach any $\boldsymbol{\nu}{*}$ to get the result. 
\end{proof}

\mathstrut

\section{Proof of theorem \ref{T1}}

The proof of Theorem \ref{T1} as in the case of $2$-handle multicurves \cite{APK2} relies on the orthogonality property of lemma \ref{l2}
\begin{proof}[Proof of theorem \ref{T1}] Let $\boldsymbol{\nu}=(\nu_{0},\dots, \nu_{r})\subset \mathcal{P}$ be a geodesic path with their end vertices at $\mathcal{P}_{Q}$. Assume that $\boldsymbol{\nu '}=(\nu_{1},\dots,\nu_{r-1})\not\subset\mathcal{P}_{Q}$. The path $\boldsymbol{\nu '}$ is still geodesic and by Theorem  \ref{TP} there are $\omega_{1} \in \pi_{Q}(\nu_{1})$ and $\omega_{r-1} \in \pi_{Q}(\nu_{r-1})$ with $d_{Q}(\pi_{Q}(\nu_{1}),\pi_{Q}(\nu_{r-1}))\leq r-2$. Besides, by orthogonality $\pi_{Q}(\nu_{1})=\nu_{0}$ and $\pi_{Q}(\nu_{r-1})=\nu_{r}$. Thus we would have $d_{Q}(\nu_{0},\nu_{r})\leq r-2$ contradicting the fact of $\boldsymbol{\nu}$ being geodesic.
\end{proof}

\mathstrut

\section{Large Flats in $\mathcal{P}$}\label{LF}

Disjointness of the complexity $1$ subsurfaces that are defined by an $n$-handle multicurve $Q$ allows us to consider a cartesian structure on the subgraph $\mathcal{P}_{Q}$. The proof of the following result is an extension to the case of $n$ such subsurfaces of Lemma 7 in \cite{APK2}.

\begin{lem} Let $Q$ be an $n-$handle multicurve. The graph $\mathcal{P}_{Q}$ is isomorphic to the product $\mathcal{P}_{Y_{1}}\ast \cdots \ast \mathcal{P}_{Y_{n}}$ of $n$ Farey graphs.\qed
\end{lem}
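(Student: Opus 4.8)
The plan is to establish a bijection on vertex sets and then promote it to a graph isomorphism by matching edges. First I would describe the vertices. A vertex of $\mathcal{P}_{Q}$ is a pants decomposition $\nu$ with $Q\subset \nu$. Since $|Q|=\kappa(\Sigma)-n$ and a pants decomposition has $\kappa(\Sigma)$ curves, exactly $n$ curves of $\nu$ lie outside $Q$. The complement $\Sigma\setminus Q$ consists of the $n$ complexity $1$ subsurfaces $Y_{1},\dots,Y_{n}$ together with a number of thrice-holed spheres, which are pieces of complexity $0$. As $\kappa(Y_{i})=1$, a pants decomposition restricted to $Y_{i}$ is a maximal multicurve in $Y_{i}$ and so contains exactly one curve $c_{i}\subset Y_{i}$, while the complexity $0$ pieces admit no interior non-peripheral curve. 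Hence the $n$ extra curves of $\nu$ are precisely one $c_{i}$ in each $Y_{i}$, and the assignment $\nu\mapsto (c_{1},\dots,c_{n})$ is a well-defined bijection between the vertex set of $\mathcal{P}_{Q}$ and the product of the vertex sets of $\mathcal{P}_{Y_{1}},\dots,\mathcal{P}_{Y_{n}}$, each $\mathcal{P}_{Y_{i}}$ being a Farey graph because $\kappa(Y_{i})=1$.

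Next I would check that this bijection carries edges to edges in both directions. Suppose $\nu$ and $\nu'$ are adjacent in $\mathcal{P}_{Q}$, related by an elementary move that replaces a curve $a$ by a curve $b$. Since $Q\subset \nu\cap \nu'$, the curve $a$ is not in $Q$, so $a=c_{i}$ for a unique index $i$, and moreover $\partial Y_{i}\subset Q\subset \nu\setminus a$. The elementary move is supported on the complexity $1$ subsurface obtained as the non-pants component of $\Sigma\setminus(\nu\setminus a)$; because the boundary $\partial Y_{i}$ survives in $\nu\setminus a$, the component meeting the old position of $c_{i}$ is exactly $Y_{i}$. Thus $b\subset Y_{i}$, all other coordinates $c_{j}$ with $j\neq i$ are unchanged, and $c_{i}\to b$ is an elementary move inside $Y_{i}$, that is, an edge of $\mathcal{P}_{Y_{i}}$. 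Conversely, any edge of $\mathcal{P}_{Y_{i}}$ is an elementary move supported in $Y_{i}$ that fixes $Q$ and every $c_{j}$ with $j\neq i$, so it yields an edge of $\mathcal{P}_{Q}$ changing only the $i$-th coordinate. This is precisely the edge relation in the Cartesian product $\mathcal{P}_{Y_{1}}\ast\cdots\ast\mathcal{P}_{Y_{n}}$.

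The step I expect to be the crux is the localization claim that the regular neighborhood of $a\cup b$ realizing the elementary move lies inside a single $Y_{i}$. Here I would invoke the homology condition in the definition of an $n$-handle multicurve, which forces the boundary curves of each $Y_{i}$ to be separating and the subsurfaces to be pairwise disjoint, so that removing the single curve $c_{i}$ reopens exactly $Y_{i}$ and cannot merge it with a neighboring piece; this is what guarantees that no move in $\mathcal{P}_{Q}$ mixes two factors. Once this localization is in place, the vertex bijection together with the edge correspondence above gives the claimed graph isomorphism, the argument being a direct $n$-fold generalization of the $n=2$ computation of Lemma 7 of \cite{APK2}.
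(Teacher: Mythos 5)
Your proof is correct and is essentially the argument the paper intends: the paper states this lemma with no proof at all, describing it only as the $n$-fold extension of Lemma 7 of \cite{APK2}, and your vertex bijection together with the edge-localization step (every move in $\mathcal{P}_{Q}$ replaces the unique curve $c_{i}\subset Y_{i}$ by another curve inside $Y_{i}$, since $\partial Y_{i}$ consists of curves of $Q\cup\partial\Sigma$ and hence survives the move) is exactly that extension written out. One minor remark: the homology condition is not actually needed for your localization claim --- the $Y_{i}$ are components of $\Sigma\setminus Q$, so no move in $\mathcal{P}_{Q}$ can remove their boundary curves or merge two factors, and disjointness alone already gives the Cartesian structure.
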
 

If $Q$ is any $n-$handle multicurve then as a result of theorem \ref{T1} we have the following

\begin{cor} Let $n$ an integer for which $\Sigma$ admits an $n-$handle multicurve. Then $\mathcal{P}$ admits an isometric embedding of $\mathbb{Z}^{n}$.
\end{cor}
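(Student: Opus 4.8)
The plan is to build the flat inside $\mathcal{P}_{Q}$, where the Cartesian product structure is available, and then transport it into $\mathcal{P}$ using total geodesicity. By the preceding Lemma we identify $\mathcal{P}_{Q}$ with the product of Farey graphs $\mathcal{P}_{Y_{1}}\ast\cdots\ast\mathcal{P}_{Y_{n}}$, so the argument splits into three pieces: (i) an isometric copy of $\mathbb{Z}$ in each factor, (ii) additivity of the product metric, and (iii) passing from $(\mathcal{P}_{Q},d_{Q})$ to $(\mathcal{P},d)$. Throughout, $\mathbb{Z}^{n}$ is understood with its standard word (i.e.\ $\ell^{1}$) metric $\|\cdot\|_{1}$.

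For step (i) I would fix in each Farey factor a bi-infinite geodesic, that is, an embedding $\gamma_{i}:\mathbb{Z}\to\mathcal{P}_{Y_{i}}$ with $d_{Y_{i}}(\gamma_{i}(k),\gamma_{i}(l))=|k-l|$, exactly as in the treatment of the $\mathbb{Z}^{2}$ case in \cite{APK2}. Such lines exist because the Farey graph has infinite diameter and is Gromov hyperbolic; concretely one may take the two-sided convergent sequence of a quadratic irrational with bounded partial quotients, for instance $\gamma_{i}(k)=F_{2k-1}/F_{2k+1}$ with the Fibonacci numbers indexed over all of $\mathbb{Z}$, so that consecutive chosen vertices are Farey neighbours and the continued-fraction distance formula shows the path is geodesic. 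One must avoid the naive ``integer'' horocycle $k\mapsto k/1$: the vertex $1/0$ is Farey-adjacent to every integer, so that path has diameter $2$. More generally the low-denominator vertices $1/0,\,0/1,\,1/1,\dots$ each have infinite valence and act as hubs, and a genuine geodesic line must avoid lingering near them; this is precisely what the continued-fraction construction arranges.

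For step (ii) I would use the standard fact that the combinatorial distance in a Cartesian product of connected graphs is additive: for $x=(x_{1},\dots,x_{n})$ and $y=(y_{1},\dots,y_{n})$,
\[
d_{Q}(x,y)=\sum_{i=1}^{n}d_{Y_{i}}(x_{i},y_{i}).
\]
The inequality $\leq$ follows by concatenating factorwise geodesics, and $\geq$ by noting that every edge of $\mathcal{P}_{Q}$ changes exactly one coordinate (an elementary move supported in a single $Y_{i}$), so projecting any edge-path to the factors yields paths whose lengths sum to the original length. Applying this to $\gamma=(\gamma_{1},\dots,\gamma_{n}):\mathbb{Z}^{n}\to\mathcal{P}_{Q}$ and using that each $\gamma_{i}$ is isometric gives $d_{Q}(\gamma(k),\gamma(l))=\sum_{i}|k_{i}-l_{i}|=\|k-l\|_{1}$, so $\gamma$ is an isometric embedding of $\mathbb{Z}^{n}$ into $(\mathcal{P}_{Q},d_{Q})$.

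Finally, for step (iii), Theorem \ref{T1} asserts that $\mathcal{P}_{Q}$ is totally geodesic in $\mathcal{P}$: every geodesic arc of $\mathcal{P}$ with endpoints in $\mathcal{P}_{Q}$ stays inside $\mathcal{P}_{Q}$, whence $d_{Q}$ coincides with the ambient distance $d$ on $\mathcal{P}_{Q}$. Composing $\gamma$ with the inclusion $\mathcal{P}_{Q}\hookrightarrow\mathcal{P}$ therefore produces the desired isometric embedding $\mathbb{Z}^{n}\to\mathcal{P}$. The pants-graph–specific content is entirely carried by Theorem \ref{T1}, so the only genuinely delicate point is step (i): since the Farey graph is not locally finite, the usual ``proper hyperbolic space'' existence argument for bi-infinite geodesics does not apply, and one is forced to exhibit the line explicitly and verify geodesicity through the continued-fraction description above. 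I expect this to be the main obstacle.
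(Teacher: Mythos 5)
Your proposal is correct and follows essentially the same route as the paper: identify $\mathcal{P}_{Q}$ with the product of Farey graphs via the preceding Lemma, take a bi-infinite geodesic in each factor, use additivity of the product ($\ell^{1}$) metric to get an isometric $\mathbb{Z}^{n}$ in $\mathcal{P}_{Q}$, and transfer to $\mathcal{P}$ by Theorem \ref{T1}. The only differences are matters of detail: the paper chooses \emph{totally geodesic} bi-infinite geodesics in the Farey factors and concludes that the flat is totally geodesic in $\mathcal{P}$, whereas you need only geodesic lines together with the (correct) observation that total geodesicity of $\mathcal{P}_{Q}$ forces $d=d_{Q}$ on $\mathcal{P}_{Q}$; and your explicit Fibonacci/continued-fraction line makes concrete an existence fact that the paper simply asserts and that both arguments ultimately borrow from \cite{APK2}.
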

\begin{proof} Let $Q$ be a $n-$handle multicurve in $\Sigma$ and let 
\[
Y_{1},\dots , Y_{n}
\]
be the complexity $1$ surfaces in $\Sigma \setminus Q$. Choose a totally geodesic, bi-infinite, geodesic $g_{i}$ contained in each Farey graph $\mathcal{P}_{Y_{i}}$. 

The product graph $g_{1}\ast \cdots \ast g_{n}$ embeds in $\mathcal{P}_{Q}$ as a totally geodesic subgraph, as $\mathcal{P}_{Q}$ is totally geodesic in $\mathcal{P}$, it results that the former is totally geodesic in $\mathcal{P}$.
\end{proof}

\mathstrut

The surface $\Sigma$ may be decomposed in subsurfaces
\[
Y_{1}, \dots , Y_{r(\Sigma)},T
\] 
with $r(\Sigma)=\lfloor \frac{\kappa(\Sigma)+1}{2} \rfloor$ and $Y_{i}$ being of complexity $1$ and either $T$ the empty set or a pair of pants \cite{Br2, Br-F}. 

The curves defining this decomposition are in case that $T$ is vacuous a $n-$handle multicurve if we abide by the definition. Nevertheless, we may consider the case in which $T$ exists by taking into account that the projection of any curve over $T$ is a curve parallel to the boundary and so considering the set $\pi_{T}(a)$ the empty set for any curve $a\subset \Sigma$.

In this case $\mathcal{P}_{Q}$ is a product of $n=r(\Sigma)$ Farey graphs. Let $L$ be the Weil-Petersson length of an edge in one of these factor graphs. We know that in case that the factor graph corresponds to a $4$-holed sphere, the value of $L$ is twice that one of in case of a $1$-holed torus \cite{Br-M}. Once we have the embedding $\mathbb{Z}^{n}$ we may construct by rescaling the euclidean metric in the appropriate directions, an isometric embedding of $\mathbb{R}^{n}$ in $\mathcal{P}_{Q}$ and then in $\mathcal{P}$, where $n$ is the maximum possible \cite{W1,Y}. 

\section*{Acknowledgments}

I thank Hugo Parlier interesting comments and essential corrections on previous versions of this manuscript.

\bibliographystyle{amsplain}

\end{document}